\documentclass[10pt]{article}

\usepackage[table,xcdraw]{xcolor}
\usepackage{hyperref}
\usepackage{natbib}

\usepackage[normalem]{ulem}
\usepackage{enumitem}
\usepackage{subcaption}

\usepackage{amsmath, amssymb, amsfonts, mathrsfs}
\usepackage{mathtools}
\usepackage{bbm, bm}
\usepackage{dsfont}
\usepackage[amsmath, thmmarks]{ntheorem}

\DeclarePairedDelimiter\rb{(}{)}
\DeclarePairedDelimiter\curbr{\lbrace}{\rbrace}
\DeclarePairedDelimiter\cb{[}{]}

\DeclarePairedDelimiter\cbr{(}{]}
\DeclarePairedDelimiter\abs{\lvert}{\rvert}
\DeclarePairedDelimiter\floorfct{\lfloor}{\rfloor}
\DeclarePairedDelimiter\ceilfct{\lceil}{\rceil}

\newcommand{\bigO}[1]{ \mathcal{O}\rb*{#1} }

\newcommand{\dd}{\, \mathrm{d}}

 \renewcommand{\epsilon}{\varepsilon}
         \renewcommand{\phi}{\varphi}

\let\E\relax \newcommand{\E}[2][]{\mathbb{E}_{#1} \left( #2 \right)}

\newcommand{\Unif}{\mathcal{U}}

\newcommand{\ind}[1]{ \mathbf{1}_{ #1 } }

\newcommand{\0}{\bm{0}}

\newcommand{\Borel}{\mathcal{B}}
\newcommand{\I}{\mathbb{I}}
\newcommand{\N}{\mathbb{N}}

\newcommand{\R}{\mathbb{R}}

\newcommand{\Copula}{\mathcal{C}}

\newcommand{\CB}{\mathcal{CB}}
\newcommand{\B}{\mathcal{B}}

\newcommand{\showchanges}{1} 
\if\showchanges1
    \newcommand{\old}[1]{\textcolor{red}{\sout{#1}}}
    
\else
    \newcommand{\old}[1]{}
    
\fi

\numberwithin{equation}{section}

\newtheorem{theorem}{Theorem}[section]

\newtheorem{lemma}[theorem]{Lemma}
\newtheorem{proposition}[theorem]{Proposition}
\newtheorem{assumption}[theorem]{Assumption}

\newtheorem{definition}[theorem]{Definition}
\newtheorem{example}[theorem]{Example}

\newtheorem{remark}[theorem]{Remark}

\makeatletter
\newtheoremstyle{MyNonumberplain}%
  {\item[\theorem@headerfont\hskip\labelsep ##1\theorem@separator]}%
  {\item[\theorem@headerfont\hskip\labelsep ##3\theorem@separator]}
\makeatother

\theoremstyle{MyNonumberplain}
\theorembodyfont{\normalfont}
\theoremsymbol{\ensuremath{\square}}
\newtheorem{proof}{Proof}

\providecommand{\keywords}[1]
{
  \small	
  \textit{Keywords}: \hangindent=7em \hangafter=1 #1
}

\providecommand{\msc}[1]
{
  \small	
  \textit{2020 MSC}: #1
}

\title{Estimating Conditional Distributions via Sklar's Theorem and Empirical 
Checkerboard Approximations, with Consequences to \\
Nonparametric Regression}

\date{}

\author{Kai Schärer\thanks{Department for Artificial Intelligence \& Human Interfaces (AIHI), University of Salzburg, Salzburg, Austria. \url{kai.schaerer@plus.ac.at}}, 
Wolfgang Trutschnig\thanks{Department for Artificial Intelligence \& Human Interfaces (AIHI), University of Salzburg, Salzburg, Austria. \url{wolfgang@trutschnig.net}, Corresponding Author}}

\begin{document}

\maketitle

\begin{abstract}
We tackle the natural question of whether it is possible to estimate 
conditional distributions via Sklar's theorem by 
separately estimating the conditional distributions of the underlying co\-pula 
and the marginals. Wor\-king with so-called empirical 
checkerboard/Bernstein approximations with suitably chosen  
resolution/degree, we first show that uniform weak convergence to the 
true underlying copula can be established under very mild regularity 
assumptions. Building upon these results and 
plugging in the univariate empirical marginal distribution functions we then 
provide an affirmative answer to the afore-mentioned question and prove 
strong consistency of the resul\-ting estimators for the conditional 
distributions. Moreover, we show that aggregating our estimators allows to 
construct consistent nonparametric estimators for the mean, the quantile, and the 
expectile regression function, and beyond. 
Some simulations illustrating the performance of the estimators and a 
real data example complement the established theoretical results.
\end{abstract}

\keywords{Conditional distribution, Copula , Nonparametric Regression}

\msc{60E05, 62G05, 62G08, 62H05}

\section{Introduction}

Allowing to flexibly model/capture the relationship between a response variable $Y$ and a vector $\mathbf{X}$ of covariates without imposing restrictive parametric assumptions, nonparametric regression techniques play a central role in modern statistics and all quantitatively oriented research fields. 
Classical regression methods typically focus on a single point estimate, such as the conditional mean, and cover kernel-based estimators going back to \citet{nadaraya64} and \citet{watson64} as well as local polynomial modelling techniques due to \citet{fan96}. Although mean regression may be considered one of the most used tools in data analysis, it only provides limited information on the conditional distribution and may overlook important aspects, such as skewness, heavy tails, and heteroscedasticity. 

One natural extension, aiming to address these limitations is nonparametric quantile regression. This method directly estimates conditional quantile functions and thus may provide a more informative description of the relationship between the response variable and the covariates. Key techniques of nonparametric quantile regressions are the linear quantile estimator due to \citet{koenker78}, estimators due to \citet{chaudhuri91}, and a method using density estimation due to \citet{huang18}. Previous work of copula based nonparametric quantile regression focuses on vine copulas (as special case
of pair-copula constructions) and is, e.g., discussed in \citet{tepegjozova2021}.

Another alternative to mean regression is expectile regression, which can be 
considered as asymmetric version of mean regression. It is a powerful tool for capturing tail risk and asymmetric conditional behaviour, with the asymmetric least square estimator studied by \citet{newey87} being the first widely recognised expectile regression method. 
As in the mean and the quantile setting, nonparametric methods were developed, such as the kernel-based estimator studied by \citet{yao96} and local polynomial modelling techniques analyzed by \citet{adam22}. Contrary to (nonparametric) mean 
and quantile regression, there is little literature on nonparametric expectile 
regression, not to mention unified approaches being able to simultaneously accommodate means, quantiles, and expectiles.

Obviously, nonparametrically estimating the conditional distribution functions 
and subsequently aggregating the latter, reconciles all three of the 
afore-mentioned research questions. The naive kernel-based distributional regression 
considered by \citet{fan93} faces various issues, such as non-monotonicity \citet{hall99}, which, in turn, was recently tackled by \citet{das20}. 

Focussing on the continuous setting and considering the fact that, according to Sklar's
theorem (see \citet{sklar59}), copulas constitute the link between joint distribution functions and their univariate marginals, it seems natural to tackle (mean, quantile, expectile) regression by modelling/estimating the underlying copula and the marginal distributions separately. Following this idea, \citet{noh13} introduced a novel 
semiparametric estimator for the regression function in the context of 
a high-dimensional predictor. As illustrated and discussed by \citet{dette14}, however, even in the bivariate setting, working with parametric families, any misspecification of the copula class `may lead to invalid results'.

In the current paper we revisit the idea of tackling regression and, more generally, 
the estimation of conditional distributions, via Sklar's theorem. 
We focus on the bivariate setting and propose a novel distributional regression 
method based on nonparametric estimation of the underlying co\-pu\-la in terms of so-called 
checkerboard and Bernstein approximations of the empirical 
copula (to which we will 
refer to as empirical checkerboard and empirical Bernstein approximations, respectively). 
Using recently established results on weak conditional convergence of (empirical) checkerboard approximations (see \citet{ansari26}), we prove that (under quite mild regularity conditions) our approach leads to strongly consistent and computationally fast 
estimators for conditional distributions. Moreover, we show that aggregating 
these estimators yields consistent estimators for mean, quantile and expectile 
regression and beyond. 
Complementing the checkerboard approach, we then focus on empirical Bernstein approximations
and derive analogous consistency results. 
Since, to the best of our knowledge, the rate of convergence both, for 
empirical checkerboard and Bernstein approximations is still unknown, we 
test the performance 
of our estimators via a small simulation study and compare their performance 
with other well-known nonparametric kernel-based regression methods. \\

The rest of this article is organized as follows. Section~\ref{sec:notation} gathers notation and preliminaries required for all subsequent sections. Section~\ref{sec:uniform_convergence} and Section~\ref{sec:uniform_convergence_bernstein} establish convergence results for the checkerboard and the Bernstein approach, respectively. As shown in Section~\ref{sec:regression}, these results then open the door 
for constructing consistent estimators for all three types of regression functions mentioned before. 
A simulation study illustrating the performance of the established estimators in Section~\ref{sec:simulation}, and a real-world example 
with insurance data in Section~\ref{sec:real_world} round off the paper.



\section{Notation and Preliminaries}\label{sec:notation}

For every metric space $(S,d)$ the Borel $\sigma$-field will be denoted by
$\Borel(S)$, the open ball of radius $r>0$ around $x \in S$ by $B_r(x)$. 
The closed unit interval will be denoted by $\I := [0,1]$, the Lebesgue measure on $\Borel(\I^2)$ and $\Borel(\I)$ by $\lambda_2$ and $\lambda$, respectively. 
For every $N \geq 2$ we will work with the partition $$I_1^N := \cb*{0, \tfrac{1}{N}}, I_2^N := \cbr*{\tfrac{1}{N}, \tfrac{2}{N}}, \dotsc, I_N^N := \cbr*{\tfrac{N-1}{N}, 1}$$ of $\I$ and (in order to simplify some formulas in the rest of the paper) 
set $I^N_0 := \emptyset =: I^N_{N+1}$. We define    
$Q^N_{i,j} := I^N_i \times I^N_j \subset \I^2$ for all 
$i, j \in \curbr{0, \dotsc, N+1}$ and, given $(x,y) \in \I^2$, 
let $i_0^N(x), j_0^N(y) \in \curbr{1, \dotsc, N}$ denote the unique indices with 
$(x, y) \in Q^N_{i_0^N(x), j_0^N(y)}$.


$\Copula$ will denote the family of all 
bivariate copulas, $\mathcal{P}$ the family of all doubly stochastic measures 
on $\mathcal{B}(\I^2)$ and $\Pi$ the product copula. 
It is well-known that every copula $A$ corres\-ponds to 
a unique doubly stochastic measure $\mu_A$ and vice versa, where the correspondence is given through the identity $A(x,y)=\mu_A([0,x] \times [0,y])$ for all $x,y \in \I$. 
We will refer to a copula $A$ as absolutely continuous, if the corresponding 
doubly stochastic measure $\mu_A$ is absolutely continuous. 
As usual, $d_\infty$ will denote the uniform metric on $\mathcal{C}$, i.e., 
$d_\infty(A,B) : = \max_{(x,y) \in [0,1]^2} |A(x,y) - B(x,y) |$. 
It is well known that $(\mathcal{C}, d_\infty)$ is a compact metric space. 
For random variables $X,Y$ we will write $(X,Y) \sim H$ if 
$(X,Y)$ has distribution function $H$ and we will write $(U,V) \sim A \in \Copula$ if 
$A$ is the distribution function of $(U,V)$ restricted to $\I^2$; in the latter
case we obviously have that $U$ and $V$ are uniformly distributed on $\I$. 
For more background on copulas we refer to \citet{sempi15} as well as to 
\citet{nelsen06}.

Suppose that $(x_1,y_1),\ldots, (x_n,y_n)$ is a sample of $(X,Y) \sim H$ with 
$H$ being continuous and suppose that $A$ is the underlying copula. Furthermore, let 
$H_n$ denote the bivariate empirical distribution function and $F_n,G_n$ the univariate empirical marginal distribution functions. 
Sklar's Theorem implies the existence of a unique subcopula (see \cite{nelsen06}) $E'_n: \operatorname{Range}(F_n) \times \operatorname{Range}(G_n) \to \operatorname{Range}(H_n)$ fulfilling 
\begin{align*}
H_n(x,y)=E'_n(F_n(x),G_n(y)),
\end{align*}
for all $(x,y) \in \mathbb{R}^2$. 
Since $H$ is continuous, ties can only occur with probability $0$, so 
$\operatorname{Range}(F_n) = \operatorname{Range}(G_n)=\{0,\frac{1}{n}, \ldots, \frac{n-1}{n},1\}$ holds almost surely. 
There are infinitely many ways to extend $E_n'$ to a copula $E_n$, in the sequel we 
will only work with the bilinear interpolation (see \cite{junker21,nelsen06}) and refer to this extension $E_n$ simply as empirical copula of the sample $(x_1,y_1),\ldots, (x_n,y_n)$.

A mapping $K:\mathbb{R} \times \mathcal{B}(\mathbb{R}) \to [0,1]$ is called a Markov kernel from $\mathbb{R}$ to $\mathcal{B}(\mathbb{R})$ if $x \mapsto K(x,B)$ is measurable for every fixed $B \in \mathcal{B}(\mathbb{R})$ and $B \mapsto K(x, B)$ is a probability measure for every fixed $x \in \mathbb{R}$. A Markov kernel $K:\mathbb{R} \times \mathcal{B}(\mathbb{R}) \to [0,1]$ is called regular conditional distribution of a 
random variable $Y$ given (another random variable) $X$ if for every $B \in \mathcal{B}(\mathbb{R})$ 
\begin{align*}
	K(X(\omega),B) = \mathbb{E}\rb*{ \ind{B} \circ Y \mid X}(\omega)
\end{align*}
holds for $\mathbb{P}$-almost every $\omega \in \Omega$, whereby $\ind{B}$ denotes the indicator function of the set $B$. It is well known that a regular conditional distribution 
of $Y$ given $X$ exists and that is unique $\mathbb{P}^X$-almost everywhere (with $\mathbb{P}^X$ 
denoting the 
distribution of $X$, i.e., the push-foward of $\mathbb{P}$ under $X$). 
For every bivariate distribution function $H$ the corresponding regu\-lar conditional distribution (i.e., the regular conditional distribution of $Y$ given $X$ in the case that $(X,Y)\sim H$) will be denoted by $K_H(\cdot, \cdot)$ and, accordingly, 
$K_A(\cdot,\cdot)$ will denote the regular conditional distribution of the copula $A \in \Copula$. In the sequel we will refer to $K_H(\cdot, \cdot)$ and $K_A(\cdot, \cdot)$
simply as (versions of the) Markov kernel of $H$ and $A$, respectively.

Suppose that $(X,Y)$ has distribution function $H$.
Given $G \in \mathcal{B}(\mathbb{R}^2)$ and letting $G_x:=\{y \in \I: \,(x,y) \in G\}$ denote the 
$x$-cut of $G$ for every $x \in \mathbb{R}$, the disintegration theorem (see \cite{kallenberg21})
implies that  
\begin{align}\label{eq:disint}
\int_\mathbb{R} K_H(x,G_x) \dd{\mathbb{P}^X(x)} = \mathbb{P}^{(X,Y)}(G)
\end{align}
holds. As particular case, for $A \in \Copula$ and arbitrary 
rectangles $G=E_1 \times E_2$ with $E_1,E_2 \in \mathcal{B}(\I)$ we have  
\begin{align*}
\int_{E_1} K_A(x,E_2) \dd{\lambda(x)} = \mu_A(E_1 \times E_2).
\end{align*}

Again suppose that $(X,Y) \sim H$ with $H$ being continuous, let $F,G$ denote the corresponding
marginal distribution functions and $A$ the (unique) underlying copula. 
Then according to \citet{mroz23}, considering the Markov kernel of $A$ and 
plugging in the marginal distribution functions by setting 
\begin{align}\label{eq:sklar.for.kernels}
K_H(x, (-\infty, y]) := K_A\left(F(x), [0, G(y)]\right)   
\end{align}
yields a Markov kernel $K_H(\cdot,\cdot)$ of $H$. We will use this interrelation when 
extending the convergence of the so-called empiricial checkerboard/Bernstein approximations 
to the convergence of the conditional distributions of $Y$ given $X$.

While Markov kernels of copulas are only unique $\lambda$-almost everywhere, 
it is straightforward to verify that every copula $A$ allows at most one version of the Markov kernel 
such that the function $(x, y) \mapsto K_A(x, [0, y])$ is continuous on $\I^2$. 
In case such a kernel exists, we will refer to it as the (unique) 
continuous Markov kernel of $A$ and say that $A$ allows a continuous Markov kernel. 
If $\mu_A$ is absolutely continuous with density $k_A$, then setting 
\begin{equation}\label{eq:int.density}
  K_A(x,[0,y]) := \int_{[0,y]} k_A(x,s) d\lambda(s), \quad x \in \I   
\end{equation}
obviously defines a Markov kernel of $A$. For more details and properties of regular conditional distributions and disintegration see \citet{kallenberg21} and \citet{klenke20}.

Given copulas $A,A_1,A_2\ldots$ we will say that the sequence $(A_n)_{n \in \mathbb{N}}$
converges weakly conditional to $A$, if for $\lambda$-almost every 
$x \in \I$ we have that the sequence $(K_A(x,\cdot))_{n \in \mathbb{N}}$ 
of probability measures on $\mathcal{B}(\I)$ converges weakly to the probability
measure $K_A(x,\cdot)$. Although in several classes like the families of 
bivariate Archimdean or extreme-value copulas weak conditional convergence 
is equivalent to convergence w.r.t. $d_\infty$ (see \citet{kasper21}), in general 
these two notions are not equivalent and weak conditional convergence implies
convergence w.r.t. $d_\infty$, but not vice versa. 
The main results of this paper (Theorem \ref{eq:uniform_convergence} and 
Theorem \ref{thm:uniform_convergence_bernstein}) build upon the following 
even stronger notion of convergence of copulas, which we will refer to as uniform conditional convergence: 
\begin{definition}
A sequence $(A_n)_{n \in \mathbb{N}}$ of copulas
converges uniformly conditional to a copula $A$, if there are 
versions $K_A,K_{A_1},K_{A_2},\ldots$ of the Markov kernels such that
\begin{align*}
 \lim_{n \rightarrow \infty} \sup_{(x,y) \in \I^2} \abs*{ K_{A_n}(x, [0, y]) - K_A(x, [0, y]) } =0.    
\end{align*}
\end{definition}

In what follows we will work with so-called checkerboard and Bernstein approximations.
Before focusing on the main results in the next sections, we first recall their construction and properties.


\subsection{Checkerboard Approximations}

Suppose that $2 \leq N \in \mathbb{N}$. Then the $N$-checkerboard approximation $\CB_N(A) \in \Copula$ of $A \in \Copula$ is by definition 
the absolutely continuous copula with (a version of the) density $k_{\CB_N(A)}$ 
given by (see \citet{li98}) 
\begin{align}\label{eq:density_cb}
    k_{\CB_N(A)}(x, y)
    &= N^2 \sum_{i,j=1}^{N} \mu_A\rb*{Q^N_{i,j}} \ind{Q^N_{i,j}}(x, y) \nonumber \\
    &\begin{multlined}[t][9cm]
        = N^2 \sum_{i,j=0}^N A\rb*{\tfrac{i}{N}, \tfrac{j}{N}} \rb*{\ind{I^N_i}(x) - \ind{I^N_{i+1}}(x)} \rb*{\ind{I^N_j}(y) - \ind{I^N_{j+1}}(y)}
    \end{multlined}
\end{align}
for all $(x, y) \in \I^2$. In the sequel, following \citet{griessenberger22} and \citet{junker21}
we will refer to $N$ as the resolution of the checkerboard approximation.
Integrating eq.~\eqref{eq:density_cb} according to eq.~\eqref{eq:int.density} directly yields the following 
expression for the Markov kernel $K_{\CB_N(A)}(\cdot,\cdot)$ of $\CB_N(A)$, which will prove useful in the sequel:
\begin{equation}\label{eq:checkerboard_approx_conditional_df}
\begin{aligned}
    K_{\CB_N(A)}(x, \cb{0,y})
    &= N^2 \sum_{i,j=1}^{N} \mu_A\rb*{Q^N_{i,j}} \ind{I^N_i}(x) \int_{\cb{0,y}} \ind{I^N_j} \dd{\lambda} \\
    &\begin{multlined}[t][9cm]
        = N^2 \sum_{i,j=0}^N A\rb*{\tfrac{i}{N}, \tfrac{j}{N}} \rb*{\ind{I^N_i}(x) - \ind{I^N_{i+1}}(x)} \int_{\cb{0,y}} \rb*{\ind{I^N_j} - \ind{I^N_{j+1}}} \dd{\lambda}.
    \end{multlined}
\end{aligned}
\end{equation}

\noindent Using the indices $i_0^N(x),j^N_0(x)$ as mentioned 
at the begging of this section, the last expression in eq.~\eqref{eq:checkerboard_approx_conditional_df} further boils down to 
\begin{align}\label{eq:checkerboard_approx_conditional_df_2}
    K_{\CB_N(A)}(x, [0, y]) 
    &= 
    N(j_0^N(y) - yN) \cb*{ A\rb*{ \tfrac{i_0^N(x)}{N}, \tfrac{j_0^N(y) - 1}{N}} - A\rb*{ \tfrac{i_0^N(x) - 1}{N}, \tfrac{j_0^N(y) - 1}{N}}} \nonumber \\
    &+ N(yN + 1 - j_0^N(y)) \cb*{ A\rb*{ \tfrac{i_0^N(x)}{N}, \tfrac{j_0^N(y)}{N}} - A\rb*{ \tfrac{i_0^N(x) - 1}{N}, \tfrac{j_0^N(y)}{N}}},
   \end{align}
which implies that $\frac{1}N K_{\CB_N(A)}(x, [0, y])$ can be interpreted as convex 
combination of the differences 
$$ A\rb*{ \tfrac{i_0^N(x)}{N}, \tfrac{j_0^N(y) - 1}{N}} - A\rb*{ \tfrac{i_0^N(x) - 1}{N}, \tfrac{j_0^N(y) - 1}{N}}, \quad   
A\rb*{ \tfrac{i_0^N(x)}{N}, \tfrac{j_0^N(y)}{N}} - A\rb*{ \tfrac{i_0^N(x) - 1}{N}, \tfrac{j_0^N(y)}{N}}.
$$
Notice that using eq.~\eqref{eq:checkerboard_approx_conditional_df_2} and the triangle inequality directly yields the following inequality for arbitrary $A_1,A_2 \in \Copula$:  
\begin{align}\label{ineq:check}
    \abs*{ K_{\CB_N(A_1)}(x, [0, y]) -  K_{\CB_N(A_2)}(x, [0, y]) } \leq  
      2Nd_\infty(A_1,A_2).
\end{align}
The importance of checkerboard approximations rests in particular on 
the fact (established via Markov operators in \cite{li98}) that for every 
copula $A \in \Copula$ the sequence $(\CB_N(A))_{N \in \mathbb{N}}$ converges weakly 
conditional to $A$ for $N \rightarrow  \infty$ (also see Section 4 in \cite{ansari26}). 
In Section 3 we will refine this result and show that if $A$ allows a continuous 
Markov kernel then the sequence $(\CB_N(A))_{N \in \mathbb{N}}$ 
even converges uniformly conditional to $A$.


\subsection{Bernstein Approximations}

Bernstein copulas (see, e.g., \cite{sanchetta04, segers17} 
and the references therein) are another popular approximation method for copulas
based on partitions of unity studied in a more general context in \citet{li98}. 
While for checkerboard approximations the partition of unity is constituted by 
$\ind{I^N_1}, \ldots,\ind{I^N_N}$, Bernstein approximations build upon  
the Bernstein polynomials. 
Recall that the $s$-th Bernstein polynomial of degree $N \in \mathbb{N}$ is given by 
\[ p_{N, s}(u) = \begin{cases}
    \binom{N}{s} u^s (1- u)^{N-s}, & s \in \curbr{0, \dotsc, N}, \\
    0, & s \notin \curbr{0, \dotsc, N}
\end{cases},  \quad u \in \I, \]
where the latter case is just considered for simplifying calculations in the sequel. 
For $s \in \curbr{1, \dotsc, N}$ it is straightforward to verify that the derivative is given by
\[ p_{N, s}'(u) = N \rb*{ p_{N-1, s-1}(u) - p_{N-1, s}(u) }\]
for all $u \in \I$. 

The following auxiliary lemma states that we can bound the sum of the Bernstein polynomials, when dropping indices around $xN$. 
\begin{lemma}\label{lem:bernstein_chernoff}
Suppose that $u \in \I$ and $N \in \N$, consider $t\geq 0$ and set  
\[ V := \curbr{ 0, \dotsc, \floorfct*{ uN - t }, \ceilfct*{uN + t}, \dotsc, N }, \]
where $\floorfct{\cdot}$ and $\ceilfct{\cdot}$ denote the floor and ceiling functions, respectively. Then the inequality 
\[ \sum_{i \in V} p_{N, i}(u) \leq 2\exp \rb*{ - \tfrac{2t^2}{N} } \]
holds.
\end{lemma}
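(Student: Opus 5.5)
We read the sum probabilistically and apply Hoeffding's inequality. Let $S \sim \mathrm{Bin}(N,u)$, so that $\mathbb{P}(S=i) = p_{N,i}(u)$ for $i \in \curbr{0,\dotsc,N}$. Then $\sum_{i \in V} p_{N,i}(u) = \mathbb{P}(S \in V)$, and the claim becomes a tail bound for a binomial random variable around its mean $uN$.

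The first step is to check that every $i \in V$ satisfies $\abs{i - uN} \geq t$. If $i \leq \floorfct{uN - t}$, then $i \leq uN - t$, so $uN - i \geq t$. If $i \geq \ceilfct{uN+t}$, then $i - uN \geq t$. Hence
\[ \mathbb{P}(S \in V) \leq \mathbb{P}(S - uN \leq -t) + \mathbb{P}(S - uN \geq t). \]
If $\floorfct{uN-t} < 0$, the lower block of $V$ is empty, and likewise the upper block is empty if $\ceilfct{uN+t} > N$. In either case the corresponding term is simply dropped or bounded trivially, so the inclusion still holds.

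The second step applies Hoeffding's inequality. Write $S = \sum_{k=1}^N B_k$ with independent $B_k \sim$ Bernoulli$(u)$, each taking values in $[0,1]$. Hoeffding gives, for $t \geq 0$,
\[ \mathbb{P}(S - uN \geq t) \leq \exp\rb*{-\tfrac{2t^2}{N}}, \qquad \mathbb{P}(S - uN \leq -t) \leq \exp\rb*{-\tfrac{2t^2}{N}}. \]
Adding the two bounds yields $2\exp(-2t^2/N)$.

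The case $t = 0$ is consistent: the bound is $2 \geq 1$, which holds trivially. This remains true even though for $t=0$ the two blocks of $V$ may overlap, for instance when $uN$ is an integer. The only minor obstacle is the bookkeeping for the index set, namely possible overlap of the two blocks or empty blocks. Since we only need an upper bound, both issues are absorbed by the inclusion $V \subseteq \curbr{i : \abs{i - uN} \geq t}$. Any double counting only weakens the union bound, which remains valid.
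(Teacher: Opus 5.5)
Your proof is correct and follows the same route as the paper: write the sum as a tail probability $\mathbb{P}(|S-uN|\geq t)$ for $S\sim\mathrm{Bin}(N,u)$, then apply the Chernoff--Hoeffding bound to each tail. Your index bookkeeping is slightly more careful than the paper's one-line identity, since you use an inclusion rather than an equality and handle the possible overlap of the two blocks at $t=0$.
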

\begin{proof}
Letting $X$ denote a random variable following a binomial distribution $\operatorname{Bin}(N, u)$, we obviously have 
\[ \sum_{i \in V} p_{N, i}(u) = \mathbb{P}({ \abs*{ X - \E{X} } \geq t }). \]
The inequality now follows directly from the Chernoff-Hoeffding bound \citep{hoeffding63}.
\end{proof}

For every copula $A \in \Copula$, its $N$-Bernstein approximation $\B_N(A)$ 
is given by
$$
\B_N(A)(x, y) := \sum_{i, j = 1}^N A \rb*{ \tfrac{i}{N}, \tfrac{j}{N} } p_{N, i}(x) p_{N, j}(y)
$$
for all $x,y \in \I$. 
Using smoothness and considering the first derivative w.r.t. $x$, the continuous version of the Markov kernel $K_{B_N(A)}$ of $\B_N(A)$ is given by (equality of the first and the second line is straightforward to verify)
\begin{equation}\label{eq:markov_kernel_bernstein}
\begin{aligned}
    K_{\B_N(A)}(x, [0, y]) :&= N \sum_{i, j = 1}^N A \rb*{ \tfrac{i}{N}, \tfrac{j}{N} } (p_{N-1, i-1}(x) - p_{N-1, i}(x)) p_{N, j}(y) \\
    &= N \sum_{i, j = 1}^N \rb*{ A \rb*{ \tfrac{i}{N}, \tfrac{j}{N} } - A \rb*{ \tfrac{i-1}{N}, \tfrac{j}{N} } } p_{N-1, i-1}(x) p_{N, j}(y)
\end{aligned}
\end{equation}
for all $(x, y) \in \I^2$.
Using the first or the second line in eq.~\eqref{eq:markov_kernel_bernstein} and the triangle inequality directly yields the following inequality (which is the Bernstein version of ineq.~\eqref{ineq:check} 
for arbitrary $A_1,A_2 \in \Copula$:  
\begin{align}\label{ineq:bern}
    \abs*{ K_{\B_N(A_1)}(x, [0, y]) -  K_{\B_N(A_2)}(x, [0, y]) } \leq  
      2Nd_\infty(A_1,A_2).
\end{align}
Translating the (Markov operator based) 
results from \cite{li98} to Markov kernels, analogous 
to checkerboard approximations we have that for every 
copula $A$ the sequence $(\B_N(A))_{N \in \mathbb{N}}$ converges weakly 
conditional to $A$ for $N  \rightarrow  \infty$.
As for checkerboard approximation in Section 3, in Section 4 we will refine this 
result and show that if $A$ allows a continuous 
Markov kernel then the sequence $(\B_N(A))_{N \in \mathbb{N}}$ 
even converges uniformly conditional to $A$.


\section{Uniform Conditional Convergence of the Empirical Checkerboard Approximations}\label{sec:uniform_convergence}

In this section we show that the checkerboard approximation $\CB_{N(n)}(E_n)$ of the empirical copula $E_n$ with appropriately chosen resolution $N=N(n)$ 
together with the empirical marginal distributions, establishes a strongly 
consistent estimator for the conditional distributions. 
In order to keep the wording simple, in the sequel we will 
refer to $\CB_{N(n)}(E_n)$ as the \emph{empirical checkerboard approximation} with
resolution $N(n)$.  We first state this sections's 
main theorem on the uniform conditional convergence of the empirical checkerboard copula and then prove the result in several steps. 
Recall that we say that $A$ has a continuous Markov kernel if, and only if there exists 
a (version of) the Markov kernel such that the function 
$(x, y) \mapsto K_A(x, [0, y])$ is continuous on $\I^2$.

\begin{theorem}\label{thm:uniform_convergence}
Suppose that $(X_1, Y_1), (X_2, Y_2), \dotsc$ is a random sample from $(X,Y)$ and assume 
that $(X,Y)$ has continuous distribution function $H$, mar\-gi\-nals $F$ and $G$, and underlying (unique) copula $A$. Furthermore suppose that $A$ allows a continuous Markov kernel $K_A$ and define $K_H$ according to eq.~\eqref{eq:sklar.for.kernels}. 
Setting $N(n) := \floorfct{n^s}$ for some $s \in \rb*{0, \frac{1}{2}}$, we have that
\begin{equation}\label{eq:uniform_convergence}
    \lim_{n \rightarrow \infty} \sup_{(x, y) \in \R^2} \abs*{K_{\CB_{N(n)}(E_n)}(F_n(x), [0, G_n(y)]) - K_H(x, (-\infty, y])} = 0
\end{equation}
almost surely. 
\end{theorem}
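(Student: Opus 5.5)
We split the error with the triangle inequality into three terms and bound each uniformly in $(x,y)\in\R^2$. Write $u=F(x)$, $v=G(y)$, $u_n=F_n(x)$, $v_n=G_n(y)$ and $N=N(n)$. Then
\begin{align*}
&\abs*{K_{\CB_{N}(E_n)}(u_n,[0,v_n]) - K_A(u,[0,v])} \leq \abs*{K_{\CB_{N}(E_n)}(u_n,[0,v_n]) - K_{\CB_{N}(A)}(u_n,[0,v_n])}\\
&\quad + \abs*{K_{\CB_{N}(A)}(u_n,[0,v_n]) - K_A(u_n,[0,v_n])} + \abs*{K_A(u_n,[0,v_n]) - K_A(u,[0,v])}.
\end{align*}
By eq.~\eqref{eq:sklar.for.kernels}, the left-hand side is exactly the quantity in eq.~\eqref{eq:uniform_convergence}.

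\emph{First term.} By ineq.~\eqref{ineq:check} this term is at most $2N d_\infty(E_n,A)$, uniformly in the arguments. To make it vanish we need an almost sure rate for $d_\infty(E_n,A)$. A law of the iterated logarithm for the empirical copula gives $d_\infty(E_n,A)=\bigO{\sqrt{\log\log n/n}}$ almost surely. Alternatively, one can combine the DKW inequality for $H_n,F_n,G_n$ with Borel--Cantelli, and use that the bilinear interpolation changes things by at most $\bigO{1/n}$; this yields $d_\infty(E_n,A)=\bigO{\sqrt{\log n/n}}$ almost surely. Since $N\le n^s$ with $s<\tfrac12$, we get $2Nd_\infty(E_n,A)\le 2n^{s}\sqrt{\log n/n}\to 0$ almost surely. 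This is the only place where the restriction $s<\tfrac12$ enters.

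\emph{Second term.} This is a deterministic statement: if $A$ has a continuous Markov kernel, then $\sup_{(u,v)\in\I^2}\abs*{K_{\CB_N(A)}(u,[0,v])-K_A(u,[0,v])}\to0$. To prove it, write $K_A(\cdot,[0,v])$ via the disintegration identity:
\[
N\bigl[A(\tfrac{i}{N},\tfrac{j}{N})-A(\tfrac{i-1}{N},\tfrac{j}{N})\bigr]=N\int_{I_i^N}K_A(t,[0,\tfrac jN])\dd\lambda(t).
\]
This is an average of $K_A(\cdot,[0,j/N])$ over $I_i^N$. By eq.~\eqref{eq:checkerboard_approx_conditional_df_2}, $K_{\CB_N(A)}(u,[0,v])$ is a convex combination of two such averages, taken at $j=j_0^N(v)-1$ and $j=j_0^N(v)$. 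Every point involved lies within distance $1/N$ of $(u,v)$ in each coordinate. Since $(t,w)\mapsto K_A(t,[0,w])$ is continuous on the compact set $\I^2$, it is uniformly continuous with some modulus $\omega$. Hence the difference is at most $\omega(2/N)\to0$.

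\emph{Third term.} By Glivenko--Cantelli, $\sup_x\abs{F_n(x)-F(x)}\to0$ and $\sup_y\abs{G_n(y)-G(y)}\to0$ almost surely. The same uniform continuity of $K_A$ on $\I^2$ then sends the third term to $0$ uniformly in $(x,y)$.

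The main obstacle is the first term. The blow-up factor $N$ coming from ineq.~\eqref{ineq:check} must be beaten by an almost sure uniform rate for the empirical copula. To secure this I would first establish $\mathbb{P}(d_\infty(E_n,A)>\epsilon_n)\le C\exp(-cn\epsilon_n^2)$. This follows from DKW applied to $H_n$ and to the marginals $F_n,G_n$, together with the fact that $A$ is $1$-Lipschitz in each argument. Then I would apply Borel--Cantelli with $\epsilon_n=n^{-1/2}\log n$.
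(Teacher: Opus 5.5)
Your proposal is correct and is essentially the paper's proof. The paper handles the same three error terms via Lemma~\ref{lem:lemma_rhs} (using the mean value theorem where you use the equivalent disintegration-average argument), Lemma~\ref{lem:checkerboard_convergence} (ineq.~\eqref{ineq:check} combined with the almost sure rate $d_\infty(E_n,A)=O(\sqrt{\log\log n/n})$ of Janssen et al.), and a final Glivenko--Cantelli plus uniform-continuity step, while your DKW/Borel--Cantelli derivation is a valid self-contained substitute for the cited rate, since any almost sure rate of order $o(n^{-s})$ suffices.
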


In order to prove Theorem~\ref{thm:uniform_convergence} 
we first derive uniform conditional convergence of the checkerboard approximation, 
then plug-in the empirical co\-pu\-la to obtain uniform conditional 
convergence of the empirical checkerboard $(\CB_{N(n)}(A))_{n \in \mathbb{N}}$, 
and finally extend the convergence via eq.~\eqref{eq:sklar.for.kernels} to $K_H(\cdot,\cdot)$.

\begin{lemma}\label{lem:lemma_rhs}
Suppose that $A \in \Copula$ allows a continuous Markov kernel $K_A$. 
Then $(\CB_{N}(A))_{N \in \mathbb{N}}$ converges uniformly conditional to $A$ for 
$N \rightarrow \infty$, i.e., 
\begin{equation}\label{eq:rhs}
    \lim_{N \rightarrow \infty} \sup_{(x,y) \in \I^2} \abs*{ K_{\CB_{N}(A)}(x, [0, y]) - K_A(x, [0, y]) } =0.
\end{equation}
\end{lemma}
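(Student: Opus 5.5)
The plan is to use eq.~\eqref{eq:checkerboard_approx_conditional_df_2} to write $K_{\CB_N(A)}(x,[0,y])$ as a convex combination of two ``block averages'' of the continuous kernel. Uniform continuity of $(x,y)\mapsto K_A(x,[0,y])$ on the compact set $\I^2$ then finishes the argument.

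\emph{Step 1.} By the disintegration identity, for every $i,j$ we have
\[
N\Big(A\big(\tfrac{i}{N},\tfrac{j}{N}\big)-A\big(\tfrac{i-1}{N},\tfrac{j}{N}\big)\Big)= N\int_{I^N_i} K_A\big(t,[0,\tfrac{j}{N}]\big)\dd\lambda(t).
\]
Write $i_0:=i_0^N(x)$ and $j_0:=j_0^N(y)$. Plugging this identity into eq.~\eqref{eq:checkerboard_approx_conditional_df_2} gives
\[
K_{\CB_N(A)}(x,[0,y]) = \alpha\, N\!\int_{I^N_{i_0}} K_A\big(t,[0,\tfrac{j_0-1}{N}]\big)\dd\lambda(t) + (1-\alpha)\, N\!\int_{I^N_{i_0}} K_A\big(t,[0,\tfrac{j_0}{N}]\big)\dd\lambda(t),
\]
where $\alpha = j_0-yN \in[0,1]$. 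Each integral term is an average of $K_A(t,[0,v])$ over $t\in I^N_{i_0}$, which is an interval of length $1/N$ containing $x$. The level $v$ is either $(j_0-1)/N$ or $j_0/N$, and both lie within $1/N$ of $y$, since $y\in I^N_{j_0}$.

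\emph{Step 2.} Since $(x,y)\mapsto K_A(x,[0,y])$ is continuous on the compact set $\I^2$, it is uniformly continuous. Given $\epsilon>0$, choose $\delta>0$ such that $|K_A(t,[0,v])-K_A(x,[0,y])|<\epsilon$ whenever $|t-x|\le\delta$ and $|v-y|\le\delta$. For $N\ge 1/\delta$, every integrand in Step 1 differs from $K_A(x,[0,y])$ by less than $\epsilon$. A convex combination of such averages then also differs from $K_A(x,[0,y])$ by less than $\epsilon$. This bound is uniform in $(x,y)$, and \eqref{eq:rhs} follows.

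The main obstacle is not the estimate itself but bookkeeping at the boundary. One has to check that $x=0$ and $y=0$ (which lie in the closed first cells), and the conventions $I_0^N=I_{N+1}^N=\emptyset$, are handled correctly by eq.~\eqref{eq:checkerboard_approx_conditional_df_2}. For $j_0=1$, one also needs $A(\cdot,0)=0$, which is consistent with $K_A(t,[0,0])=0$ for the continuous version. A second point is that $K_A$ in the disintegration is only determined almost everywhere. However, the integral identity holds for any version, so we may insert the continuous one. Hence no measure-theoretic issue arises.
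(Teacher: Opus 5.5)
Your proposal is correct and follows essentially the same route as the paper. Both write $K_{\CB_N(A)}(x,[0,y])$ via eq.~\eqref{eq:checkerboard_approx_conditional_df_2} as a convex combination, with weights $j_0^N(y)-yN$ and $yN+1-j_0^N(y)$, of kernel values taken within distance $1/N$ of $(x,y)$, and both conclude by uniform continuity of $(x,y)\mapsto K_A(x,[0,y])$ on $\I^2$. The only cosmetic difference is that you keep the block averages $N\int_{I^N_{i_0}}K_A(t,[0,v])\,\mathrm{d}\lambda(t)$ obtained from disintegration, whereas the paper uses the mean value theorem to replace each average by a single value $K_A(\xi,[0,v])$ at an intermediate point $\xi$ of the cell.
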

\begin{proof}
Let $\epsilon > 0$ be arbitrary but fixed. By uniform continuity of the function $(x, y) \mapsto K_A(x, [0, y])$ on $\I^2$, there exists some $\delta > 0$, such that for every $(x_1, y_1), (x_2, y_2) \in \I^2$ the following implication holds: 
\[ \abs{x_1 - x_2} + \abs{y_1 - y_2} < \delta \implies \abs{K_A(x_1, [0, y_1]) - K_A(x_2, [0, y_2])} < \tfrac{\varepsilon}{2}\]

\noindent Choose $N_0$ sufficiently large, so that $2/N < \delta$ for every $N \geq N_0$. Consider $N \geq N_0$ and let $(x,y) \in \I^2$ be arbitrary but fixed.  \\
We first derive an alternative version of eq.~\eqref{eq:checkerboard_approx_conditional_df_2}. 
By the mean value theorem we can find $\xi_1, \xi_2 \in \rb*{\tfrac{i_0^N(x) - 1}{N}, \tfrac{i_0^N(x)}{N}}$, such that  
\begin{align*}
    \tfrac{1}{N} K_A \rb*{ \xi_1, \cb*{ 0, \tfrac{j_0^N(y) - 1}{N}}} &= 
     A\rb*{\tfrac{i_0^N(x)}{N}, \tfrac{j_0^N(y) - 1}{N}} 
    - A\rb*{\tfrac{i_0^N(x) - 1}{N}, \tfrac{j_0^N(y) - 1}{N}} \\
    \tfrac{1}{N} K_A \rb*{ \xi_2, \cb*{ 0, \tfrac{j_0^N(y)}{N}}} &= A\rb*{\tfrac{i_0^N(x)}{N}, \tfrac{j_0^N(y)}{N}} - A\rb*{\tfrac{i_0^N(x) - 1}{N}, \tfrac{j_0^N(y)}{N}}
\end{align*} 
hold. Combining the two expressions directly yields 
\begin{align*}
    K_{\CB_N(A)}(x, [0, y]) &= 
    (j_0^N(y) - yN) K_A \rb*{ \xi_1, \cb*{ 0, \tfrac{j_0^N(y) - 1}{N}}} \\
    & \quad + \,(yN + 1 - j_0^N(y)) K_A \rb*{ \xi_2, \cb*{ 0, \tfrac{j_0^N(y)}{N}}}.
\end{align*}
Using the triangle inequality for $\Delta_N:= \abs*{K_{\CB_N(A)}(x, [0, y]) - K_A(x, [0, y])}$ we therefore obtain
\begin{align*}
    \Delta_N \leq \begin{multlined}[t]
        (j_0^N(y) - yN) \abs*{ K_A \rb*{ \xi_1, \cb*{ 0, \tfrac{j_0^N(y) - 1}{N}}} - K_A(x, [0, y])} \\        
        + (yN + 1 - j_0^N(y)) \abs*{ K_A \rb*{ \xi_2, \cb*{ 0, \tfrac{j_0^N(y)}{N}}} - K_A(x, [0, y])}.
    \end{multlined} 
\end{align*}
Considering $\abs*{\xi_1 - x} + \abs*{ \tfrac{j_0^N(y) - 1}{N} - y} < \delta$ and $\abs*{\xi_2 - x} + \abs*{ \tfrac{j_0^N(y)}{N} - y } < \delta$, this implies 
\[ \Delta_N=\abs*{K_{\CB_N(A)}(x, [0, y]) - K_A(x, [0, y])} < \epsilon. \]
Since $(x, y)$ was arbitrary and $N_0$ only depends on $\varepsilon$, the proof is complete. 
\end{proof}

The next lemma shows that in case $A$ allows a continuous Markov kernel, 
the empirical checkerboard approximation $\CB_{N(n)}(E_n)$ 
converges uniformly condition to $A$. 

\begin{lemma}\label{lem:checkerboard_convergence}
Suppose that $(X_1, Y_1), (X_2, Y_2), \dotsc $ is a random sample from $(X,Y)$ and assume 
that $(X,Y)$ has continuous distribution function $H$, mar\-gi\-nals $F$ and $G$, and underlying (unique) copula $A$. Furthermore suppose that $A$ allows a continuous Markov kernel $K_A$.
Setting $N(n) := \floorfct{n^s}$ for some $s \in (0, \frac{1}{2})$, we have 
\begin{equation}\label{eq:checkerboard_convergence}
    \lim_{n \rightarrow \infty} \sup_{(x, y) \in \I^2} \abs*{K_{\CB_{N(n)}(E_n)}(x, [0, y]) - K_A(x, [0,y])} = 0
\end{equation}
almost surely. In other words, the sequence $(\CB_{N(n)}(E_n))_{n \in \mathbb{N}}$ converges uniformly conditional to $A$ almost surely. 
\end{lemma}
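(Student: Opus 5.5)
Split via the triangle inequality: for every $(x,y)\in\I^2$
\begin{align*}
\abs*{K_{\CB_{N(n)}(E_n)}(x,[0,y]) - K_A(x,[0,y])} &\leq \abs*{K_{\CB_{N(n)}(E_n)}(x,[0,y]) - K_{\CB_{N(n)}(A)}(x,[0,y])} \\
&\quad + \abs*{K_{\CB_{N(n)}(A)}(x,[0,y]) - K_A(x,[0,y])}.
\end{align*}
Since $N(n)=\floorfct{n^s}\to\infty$, Lemma~\ref{lem:lemma_rhs} shows that the supremum of the second term over $\I^2$ tends to $0$ deterministically. For the first term, ineq.~\eqref{ineq:check} gives the uniform bound $2N(n)\, d_\infty(E_n,A)\le 2n^s d_\infty(E_n,A)$. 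It therefore suffices to show $n^s d_\infty(E_n,A)\to 0$ almost surely for every $s<\tfrac12$.

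For this I would use a law of the iterated logarithm type bound for the empirical copula. Write $\hat C_n(u,v):=H_n(F_n^{-1}(u),G_n^{-1}(v))$ for the rank-based empirical copula. On the grid it agrees with the bilinearly interpolated $E_n$, and off the grid it differs from $E_n$ by at most $2/n$, since both are nondecreasing in each argument and the subcopula is $1/n$-Lipschitz. For $\hat C_n$, the standard decomposition with $U_i=F(X_i)$, $V_i=G(Y_i)$ (uniform) gives
\begin{align*}
d_\infty(\hat C_n,A) \le \|C_n^U - A\|_\infty + \|F_n^U\circ\ldots\|\text{-terms} \le \sup_{u,v}\abs{C_n^U(u,v)-A(u,v)} + \sup_u\abs{F_n^{U,-1}(u)-u}+\sup_v\abs{G_n^{V,-1}(v)-v},
\end{align*}
where $C_n^U$ is the empirical distribution function of $(U_i,V_i)$. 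This uses that $A$ is $1$-Lipschitz in each argument and that the quantile deviation equals the distribution-function deviation up to $1/n$. By the bivariate Kiefer (Chung--Smirnov) LIL, or directly by the DKW-type exponential inequality (Kiefer--Wolfowitz) together with Borel--Cantelli, each supremum is $O(\sqrt{\log\log n/n})$ almost surely. Concretely, $\mathbb P(\sqrt n\,\|C_n^U-A\|_\infty>t)\le c\,e^{-2t^2+\ldots}$ for some constant $c$, and taking $t=n^{1/2-s}\epsilon$ gives summable probabilities. Hence $n^s d_\infty(E_n,A)\le n^s\bigl(O(\sqrt{\log\log n/ n})+2/n\bigr)\to0$ almost surely, since $s<\tfrac12$.

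The main obstacle is this almost-sure rate for $d_\infty(E_n,A)$. Specifically, one must carefully control the passage from the bilinear extension $E_n$ to the rank-based empirical copula and the replacement of the marginals by their empirical versions. The rest is bookkeeping, and taking the supremum over $(x,y)$ together with the two bounds completes the argument.
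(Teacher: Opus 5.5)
Your proposal is correct and follows the paper's proof. It uses the same triangle-inequality split, Lemma~\ref{lem:lemma_rhs} for the deterministic term, and ineq.~\eqref{ineq:check} for the uniform bound $2N(n)\,d_\infty(E_n,A)$. The only difference is in the final rate. The paper simply cites Janssen et al.\ (2012) for $d_\infty(E_n,A)=O(\sqrt{\log\log n/n})$ almost surely. You instead sketch a self-contained derivation via the rank-based empirical copula, marginal quantile terms, and an exponential inequality with Borel--Cantelli, and this derivation is sound. Strictly, that argument yields $n^s d_\infty(E_n,A)\to 0$ almost surely rather than the $\log\log$ rate, which needs an LIL, but this is all the lemma requires.
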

\begin{proof}
Setting $\Delta_n(x,y):=\abs*{K_{\CB_{N(n)}(E_n)}(x, [0, y]) - K_A(x, [0,y])}$, 
applying the triangle inequality yields 
\begin{align*}
    \sup_{(x, y) \in \I^2} \Delta_n(x,y)
    &\leq
        \sup_{(x, y) \in \I^2} \abs*{K_{\CB_{N(n)}(E_n)}(x, [0, y]) - K_{\CB_{N(n)}(A)}(x, [0,y])} \\
       & \quad  + \sup_{(x, y) \in \I^2} \abs*{K_{\CB_{N(n)}(A)}(x, [0, y]) - K_A(x, [0,y])}
    \end{align*}
According to Lemma~\ref{lem:lemma_rhs}, the latter summand converges to zero for 
$n \rightarrow \infty$, it thus suffices to show that the former summand converges to zero with probability one.
Using ineq.~\eqref{ineq:check} directly yields
\[ \abs*{K_{\CB_{N(n)}(E_n)}(x, [0, y]) - K_{\CB_{N(n)}(A)}(x, [0,y])} \leq 2 N(n) d_{\infty}(E_n, A) \]
for all $(x,y) \in \I^2$. Since the upper bound does not depend on $(x,y)$ and 
$(x,y) \in \I^2$ was arbitrary we have shown
\[ \sup_{(x, y) \in \I^2} \abs*{K_{\CB_{N(n)}(E_n)}(x, [0, y]) - K_{\CB_{N(n)}(A)}(x, [0,y])} \leq 2 N(n) d_{\infty}(E_n, A). \]
The desired result now follows from our choice of $s$ and the fact that according to \citet{janssen12} 
$$
d_\infty(E_n,A)=O\left(\sqrt{\frac{\log(\log(n))}{n}}\right)
$$
with probability one for $n \rightarrow \infty$.
\end{proof}

\noindent We now have all the tools ready for proving Theorem~\ref{thm:uniform_convergence}.
\begin{proof}[Proof of Theorem~\ref{thm:uniform_convergence}]
Setting  
$$
\Delta_n(x,y):=  \sup_{(x, y) \in \R^2} \abs*{K_{\CB_{N(n)}(E_n)}(F_n(x), [0, G_n(y)]) - 
       K_H(x, (-\infty, y])}
$$
and using eq.~\eqref{eq:sklar.for.kernels} and the triangle inequality directly yields
\begin{align*}
    \Delta_n(x,y)&= \sup_{(x, y) \in \R^2} \abs*{K_{\CB_{N(n)}(E_n)}(F_n(x), [0, G_n(y)]) - K_A(F(x), [0, G(y)])} \\
    &\leq \sup_{(x, y) \in \R^2} \abs*{K_{\CB_{N}(E_n)}(F_n(x), [0, G_n(y)]) - K_A(F_n(x), [0, G_n(y)])} \\
        & \quad + \sup_{(x, y) \in \R^2} \abs*{K_A(F_n(x), [0, G_n(y)]) - K_A(F(x), [0, G(y)])}.
\end{align*}
According to Lemma~\ref{lem:checkerboard_convergence}, the former summand converges to zero almost surely as $n \to \infty$. 
Moreover, using uniform continuity of the function $(x, y) \mapsto K_A(x, [0, y])$ on $\I^2$ 
together with the Glivenko-Cantelli theorem, the latter summand converges to zero almost surely for $n \to \infty$ as well, so the proof is complete. 
\end{proof}

\begin{remark}
\emph{
    To the best of the authors' knowledge, the asymptotics of the empirical checkerboard approximation $\CB_{N(n)}(E_n)$ are still unknown. The preprint \cite{CBEasymptotics} 
    seems to be the first providing results on the asymptotic theory under the 
    strong assumption that the third-order partial derivatives of the 
    underlying copula $C$ are bounded (which are far more restrictive than 
    asking for $C$ to allow a continuous Markov kernel).  
    }
\end{remark}

In the remainder of this section, we show that for copulas that do not admit a continuous Markov kernel, even the checkerboard approximations $\CB_{N}(A)$ fail to converge in the sense of Lemma~\ref{lem:lemma_rhs}.  
Before establishing this result, we present a simple example to illustrate what can go wrong.

\begin{example}
\emph{
For the minimum copula $M$ a version of the Markov kernel $K_M(\cdot,\cdot)$ is given by
\[ K_M(x, [0,y]) = \ind{[0, y]}(x), \quad (x, y) \in \I^2. \]
Obviously the map $(x, y) \mapsto K_M(x, [0, y])$ has a discontinuity at all points of the 
form $(x,x)$ with $x \in \I$. 
According to eq.~\eqref{eq:checkerboard_approx_conditional_df}, a Markov kernel of 
the $N$-checkerboard approximation $\CB_{N(n)}(M)$ of $M$ is given by 
\begin{equation*}
    K_{\CB_{N(n)}(M)}(x, [0,y]) = \begin{cases}
        0, & \text{if $y \leq \tfrac{i_0^N(x) - 1}{N}$} \\
        yN - i_0^N(x) + 1, & \text{if $y \in \rb*{ \tfrac{i_0^N(x) - 1}{N}, \tfrac{i_0^N(x)}{N} } $} \\
        1, & \text{if $y \geq \tfrac{i_0^N(x)}{N}$}
        \end{cases}
\end{equation*}
for every $(x, y) \in \I^2$. Considering $K_M(x,\{x\})=1$ for every $x \in \I$, 
it follows immediately that $K_M(x, [0,y]) \in \curbr{0, 1}$. On 
the other hand, continuity of $y \mapsto K_{\CB_{N(n)}(M)}(x, [0,y])$ implies the existence of some $y_x$ with $K_{\CB_{N}(M)}(x, [0, y_x]) = \tfrac{1}{2}.$
Altogether this shows that for every $x \in  \I$ we even have 
\[ \sup_{y \in \I} \abs*{ K_M(x, [0,y]) - K_{\CB_{N}(M)}(x, [0, y]) } \geq \tfrac{1}{2}. \]
}
\end{example}

We now turn to the general case of copulas not allowing a continuous Markov kernel and 
show that we can not have uniform conditional convergence.
\begin{proposition}
Let $A \in \Copula$ be a copula and $K_A$ be a version of the Markov kernel, such that the function $(x, y) \mapsto K_A(x, [0, y])$ has a discontinuity. Then, the sequence 
$(\Delta_N)_{N \in \N}$, given by
\[ \Delta_N := \sup_{(x, y) \in \I^2} \abs*{K_{\CB_{N}(A)}(x, [0, y]) - K_A(x, [0,y])} \]
does not converge to $0$ for $N \rightarrow \infty$.
\end{proposition}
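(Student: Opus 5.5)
The plan is to argue by contradiction. Assume $\Delta_N \to 0$ for the given version $K_A$. I will show that $(x,y)\mapsto K_A(x,[0,y])$ must then be continuous on $\I^2$, contradicting the hypothesis. The only structural facts about the checkerboard kernels I need, both read off from eq.~\eqref{eq:checkerboard_approx_conditional_df_2}, are these:
\begin{itemize}
\item For fixed $y$, the map $x\mapsto K_{\CB_N(A)}(x,[0,y])$ is constant on each cell $I^N_i$.
\item For fixed $x$, the map $y\mapsto K_{\CB_N(A)}(x,[0,y])$ is Lipschitz with constant at most $N$. This holds because the density $k_{\CB_N(A)}$ is bounded by $N^2\mu_A(Q^N_{i,j})\le N^2\cdot\tfrac1N=N$.
\end{itemize}
Note that $\Delta_N$ is a supremum over \emph{all} $(x,y)$, so the estimates below hold pointwise for the fixed version $K_A$, not merely $\lambda$-a.e.

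\textbf{Step 1 (oscillation in $x$).} Since $K_{\CB_N(A)}(\cdot,[0,y])$ is constant on $I^N_i$, any $t,t'\in I^N_i$ satisfy
\[
\abs*{K_A(t,[0,y])-K_A(t',[0,y])}\le 2\Delta_N \quad\text{for all } y\in\I .
\]
The difficulty is that two nearby points may lie in different $N$-cells. To handle this I use the two grids with resolutions $N$ and $N+1$ simultaneously. Their interior boundary points $i/N$ and $k/(N+1)$ are pairwise distinct. Moreover, they are separated by at least $\tfrac{1}{N(N+1)}$, since $\abs{i(N+1)-kN}\ge 1$.

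Hence if $\abs{x-x'}<\tfrac1{N(N+1)}$, the segment between $x$ and $x'$ contains at most one boundary point of the union of the two grids. So $x$ and $x'$ lie in a common cell of at least one of the two grids; one should check the half-open convention of the cells here. This gives
\[
\abs*{K_A(x,[0,y])-K_A(x',[0,y])}\le 2\max(\Delta_N,\Delta_{N+1}) \quad\text{for all } y.
\]
I expect this two-grid trick to be the main obstacle. A single grid cannot control the jumps across cell boundaries, and the checkerboard kernels themselves are genuinely discontinuous in $x$, so one cannot simply argue that a uniform limit of continuous functions is continuous.

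\textbf{Step 2 (continuity in $y$).} For any $M$ and any $x',y,y'$,
\[
\abs*{K_A(x',[0,y])-K_A(x',[0,y'])}\le 2\Delta_M + M\abs{y-y'} .
\]
This follows from the triangle inequality through $K_{\CB_M(A)}(x',\cdot)$ together with its Lipschitz bound.

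\textbf{Step 3 (conclusion).} Fix $\epsilon>0$ and choose $N$ with $2\max(\Delta_N,\Delta_{N+1})<\epsilon/2$ and $2\Delta_N<\epsilon/4$. Then set $\delta:=\min\bigl(\tfrac1{N(N+1)},\tfrac{\epsilon}{4N}\bigr)$. Splitting
\[
\abs*{K_A(x,[0,y])-K_A(x',[0,y'])}\le \abs*{K_A(x,[0,y])-K_A(x',[0,y])}+\abs*{K_A(x',[0,y])-K_A(x',[0,y'])},
\]
and applying Step 1 to the first term and Step 2 with $M=N$ to the second, shows that $\abs{x-x'}+\abs{y-y'}<\delta$ implies the left-hand side is $<\epsilon$. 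Thus $(x,y)\mapsto K_A(x,[0,y])$ is (even uniformly) continuous on $\I^2$, which contradicts the assumed discontinuity. Therefore $\Delta_N\not\to 0$.
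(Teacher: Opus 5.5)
Your argument is correct, but it takes a different route from the paper.

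The paper works locally and directly. It fixes a discontinuity point $(x_0,y_0)$ with jump size $\epsilon_0$. It then restricts to the infinitely many $N$ with $x_0,y_0\notin\{\tfrac1N,\dots,\tfrac{N-1}{N}\}$, so that $(x_0,y_0)$ is an interior point of a single cell on which $K_{\CB_N(A)}(\cdot,[0,\cdot])$ is Lipschitz. A reverse triangle inequality then gives $\Delta_N\ge\epsilon_0/4$ along these $N$. In effect this is the classical fact that a uniform limit of functions continuous at a point is continuous there, applied along a subsequence. Only one point matters and $N$ may be chosen so that this point avoids the grid, so the cell-boundary problem you single out as the main obstacle never arises there.

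You instead prove the contrapositive globally: $\Delta_N\to0$ forces uniform continuity of $K_A$. This does require controlling all cell boundaries at once, and your coprimality trick with resolutions $N$ and $N+1$ handles it correctly. The half-open check you defer also goes through. For $x<x'$, the two points lie in different $N$-cells if and only if some interior grid point $b$ satisfies $x\le b<x'$. The interval $[x,x')$ contains at most one point of the union of the two interior grids, and that point belongs to only one of the grids.

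The paper's route is shorter and bounds $\Delta_N$ itself. Yours gives a slightly more uniform statement: letting $(x',y')\to(x_0,y_0)$ in your Steps 1--2 yields $\max(\Delta_N,\Delta_{N+1})\ge\epsilon_0/4$ for every $N$, with no exceptional resolutions.
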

\begin{proof}
Suppose that $(x_0, y_0) \in \I^2$ is a discontinuity point of the function $(x, y) \mapsto K_A(x, [0, y])$. Then there exists some $\epsilon_0 > 0$ such that for every 
$\delta > 0$ we can find some $(x_{\delta}, y_{\delta}) \in B_\delta(x_0, y_0)$ satisfying
\[ \abs*{K_A(x_{\delta}, [0, y_{\delta}]) - K_A(x_0, [0, y_0])} \geq \epsilon_0. \]
Fix some $N \in \N$ with $x_0, y_0 \notin \curbr*{\tfrac{1}{N}, \dotsc, \tfrac{N-1}{N}}$. 
Then $(x_0,y_0)$ is an inner point of $Q_{i_0^N(x_0), j_0^N(y_0)}^N$, so choosing  
$\delta_N > 0$ sufficiently small we obviously have $B_{\delta_N}(x_0, y_0) \subset Q_{i_0^N(x_0), j_0^N(y_0)}^N$. \\
The function $f: (x, y) \mapsto K_{\CB_N(A)}(x, [0, y])$ is Lipschitz continuous on the interior of $Q_{i_0^N(x_0), j_0^N(y_0)}^N$ (in fact, it is constant in $x$ for given $y$ 
and affine in $y$ given $x$). In particular, $f$ is also Lipschitz continuous on 
$B_{\delta_{\delta'_{N}}}(x_0, y_0)$, hence there exists some $\delta'_N \in (0,\delta_N) $ 
such that for every $(x, y) \in \I^2$ with $\abs{x-x_0} + \abs{y - y_0} < \delta'_N$, we have that the inequality
\[ \abs*{ K_{\CB_N(A)}(x, [0, y]) - K_{\CB_N(A)}(x_0 [0, y_0]) } < \tfrac{\epsilon_0}{2} \]
holds. 
Using the reverse triangle inequality (in the last step) we conclude that
\begin{align*}
    2 \Delta_N &\geq 2 \sup_{(x,y) \in B_{\delta'_N} \rb*{x_0, y_0}} \abs*{K_{\CB_N(A)}(x, [0, y]) - K_A(x, [0, y])} \\
    &\geq \abs*{K_{\CB_N(A)}\rb*{x_{\delta'_N}, [0, y_{\delta'_N}]} - K_A \rb*{x_{\delta'_{N}}, [0, y_{\delta'_{N}}]}} \\
    & \qquad + \abs*{K_{\CB_N(A)}(x_0, [0, y_0]) - K_A(x_0, [0, y_0])} \\
    &\geq \abs*{K_A\rb*{x_{\delta'_N}, [0, y_{\delta'_N}]} - K_A(x_0, [0, y_0])} \\
    &\qquad\qquad- \abs*{K_{\CB_N(A)}\rb*{x_{\delta'_N}, [0, y_{\delta'_N}]} - K_{\CB_N(A)}(x_0, [0, y_0])} \\
    &> \tfrac{\epsilon_0}{2}.
\end{align*}
In other words, we have shown that
\[ \sup_{(x,y) \in \I^2} \abs*{K_{\CB_N(A)}(x, [0, y]) - K_A(x, [0, y])} \geq \epsilon_0/4 \]
holds for every $N \in \N$, such that $x_0, y_0 \notin \curbr*{\tfrac{1}{N}, \dotsc, \tfrac{N-1}{N}}$. Since there are infinitely many such $N$, we are done.
\end{proof}

\section{Uniform Conditional Convergence of the empirical Bernstein approximations}\label{sec:uniform_convergence_bernstein}

Analogous to the empirical checkerboard approximation, in the sequel we will refer to 
the Bernstein approximation of the empirical copula $\B_N(E_n)$ as 
\emph{empirical Bernstein approximation}. 
In a nutshell, in this section we show that the results derived for empirical 
checkerboard approximations also hold for empirical Bernstein approximations (with adequately chosen degree $N=N(s)$).  

\begin{lemma}\label{lem:bernstein_convergence}
Suppose $A \in \Copula$ allows a continuous Markov kernel. Then, we have that
\[ \lim_{N \rightarrow \infty} 
\sup_{(x, y) \in \I^2} \abs*{ K_{\B_N(A)}(x, [0, y]) - K_A(x, [0, y]) }= 0, \]
i.e., $(\B_{N}(A))_{N \in \mathbb{N}}$ converges uniformly conditional to $A$.  
\end{lemma}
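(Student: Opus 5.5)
The strategy mirrors the proof of Lemma~\ref{lem:lemma_rhs}. We rewrite $K_{\B_N(A)}(x,[0,y])$ as a weighted average of values of $K_A$ at points close to $(x,y)$. The weights concentrate near $(x,y)$ by Lemma~\ref{lem:bernstein_chernoff}, and uniform continuity of $(x,y)\mapsto K_A(x,[0,y])$ then gives the claim.

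Start from the second line of eq.~\eqref{eq:markov_kernel_bernstein}. By the mean value theorem, as in the proof of Lemma~\ref{lem:lemma_rhs}, for every $i,j$ there is $\xi_{i,j}\in\rb*{\tfrac{i-1}{N},\tfrac iN}$ with
\[ N\rb*{A\rb*{\tfrac iN,\tfrac jN}-A\rb*{\tfrac{i-1}N,\tfrac jN}} = K_A\rb*{\xi_{i,j},\cb*{0,\tfrac jN}}. \]
Since $K_A$ is continuous, $x\mapsto A(x,\tfrac jN)$ is $C^1$ with this derivative. Hence
\[ K_{\B_N(A)}(x,[0,y]) = \sum_{i,j=1}^N K_A\rb*{\xi_{i,j},\cb*{0,\tfrac jN}}\, p_{N-1,i-1}(x)\,p_{N,j}(y). \]

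The weights are almost a probability vector. We have $\sum_{i=1}^N p_{N-1,i-1}(x)=1$, but $\sum_{j=1}^N p_{N,j}(y)=1-(1-y)^N$. The missing $j=0$ term would carry $K_A(\xi,[0,0])=0$, since $K_A(x,\{0\})=0$ for a.e.\ $x$ and hence for all $x$ by continuity. So we may include $j=0$ for free, and the weights $p_{N-1,i-1}(x)p_{N,j}(y)$, $i\in\{1,\dots,N\}$, $j\in\{0,\dots,N\}$, form a product probability vector. Consequently
\[ \Delta_N(x,y):=\abs*{K_{\B_N(A)}(x,[0,y])-K_A(x,[0,y])}\le \sum_{i,j} \abs*{K_A\rb*{\xi_{i,j},\cb*{0,\tfrac jN}}-K_A(x,[0,y])}\,p_{N-1,i-1}(x)p_{N,j}(y). \]

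Fix $\epsilon>0$ and choose $\delta$ via uniform continuity so that points within $\ell^1$-distance $\delta$ have $K_A$-values within $\epsilon/2$. Set $t=N^{3/4}$ and split the index set into two parts.

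\emph{Good indices}, where $|i-1-(N-1)x|<t$ and $|j-Ny|<t$. Here $|\xi_{i,j}-x|\le \tfrac{t+2}{N-1}$ and $|\tfrac jN-y|<\tfrac tN$. Both are $O(N^{-1/4})$, so for large $N$ the summands are $<\epsilon/2$ and the total contribution is at most $\epsilon/2$.

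\emph{Bad indices}, the remaining ones. Each summand is bounded by $1$. By Lemma~\ref{lem:bernstein_chernoff}, applied once with degree $N-1$ in $x$ and once with degree $N$ in $y$, the bad weights sum to at most $2e^{-2t^2/(N-1)}+2e^{-2t^2/N}\le 4e^{-2\sqrt N}$. This is $<\epsilon/2$ for large $N$, uniformly in $(x,y)$.

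Thus $\sup_{(x,y)}\Delta_N<\epsilon$ for all $N$ large enough. The main point needing care is the bookkeeping of index ranges: the shift $i-1$ against the degree $N-1$, and the boundary term $j=0$. The concentration step itself is routine given Lemma~\ref{lem:bernstein_chernoff}.
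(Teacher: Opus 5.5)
Your proof is correct and follows essentially the same route as the paper: use the mean value theorem to write $K_{\B_N(A)}(x,[0,y])$ as a Bernstein-weighted average of the values $K_A(\xi_{i,j},[0,\tfrac jN])$, discard the far indices with the Hoeffding bound of Lemma~\ref{lem:bernstein_chernoff}, and control the near indices by uniform continuity. The paper uses a window of width $N^{1-r}$ with $r\in(0,\tfrac12)$, whereas you take $r=\tfrac14$; your explicit handling of the missing $j=0$ weight (via $K_A(\cdot,\{0\})\equiv 0$) and of the union bound over both coordinates is in fact slightly more careful than the paper's bare appeal to the partition of unity.
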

\begin{proof}
Let $\epsilon > 0$ be arbitrary but fixed. By uniform continuity of the function $(x, y) \mapsto K_A(x, [0, y])$ on $\I^2$, there exists some $\delta > 0$, such that for every $(x_1, y_1), (x_2, y_2) \in \I^2$ the following implication holds: 
\[ \abs{x_1 - x_2} + \abs{y_1 - y_2} < \delta \implies \abs{K_A(x_1, [0, y_1]) - K_A(x_2, [0, y_2])} < \tfrac{\varepsilon}{2}\]
Choose $N_0$ sufficiently large so that
\[ 2 \exp \rb*{ - 2N^{1-2r} } < \tfrac{\epsilon}{2} \quad \text{and} \quad \tfrac{3 + 2 N^{1-r}}{N} < \tfrac{\delta}{2} \]
hold for every $N \geq N_0$, whereby $r \in \rb*{0, \tfrac{1}{2}}$. \\
Using the mean value theorem, for every $i \in \curbr*{1, \dotsc, N}$ and 
$j \in \curbr*{0, \dotsc, N}$ we can find some $\xi_{i, j} \in \rb*{ \tfrac{i - 1}{N}, \tfrac{i}{N} }$ such that
\[ \tfrac{1}{N} K_A \rb*{ \xi_{i, j} \cb*{0, \tfrac{j}{N}} } = A \rb*{ \tfrac{i}{N}, \tfrac{j}{N} } - A \rb*{ \tfrac{i - 1}{N}, \tfrac{j}{N} }. \]
This shows that the last line in eq.~\eqref{eq:markov_kernel_bernstein} can be rewritten as
\begin{align*}
    K_{\B_N(A)}(x, [0, y])
    = \sum_{i, j = 1}^N K_A\rb*{ \xi_{i, j}, \cb*{ 0, \tfrac{j}{N} } } p_{N-1, i-1}(x) p_{N, j}(y).
\end{align*}
Setting $\Delta_N(x,y):=\abs*{ K_{\B_N(A)}(x, [0, y]) - K_A(x, [0, y]) }$ 
for every $(x,y) \in \I^2$ and using the fact that Bernstein polynomials are a partition of unity, we have
\begin{multline}\label{eq:bernstein_copula_ineq}
    \Delta_N(x,y) \leq
    \sum_{i=1}^N \sum_{j=1}^N p_{N-1, i-1}(x) p_{N, j}(y)  \abs*{ K_A \rb*{ \xi_{i, j}, \cb*{0, \tfrac{j}{N} } } - K_A(x, [0, y]) }.
\end{multline}
Defining the sets $U_x, V_y \subset \curbr{0, \dotsc, N}$ by
\begin{align*}
    U_x &:=  \curbr*{ 0, \dotsc, \floorfct{x(N-1) - \Delta_N}, \ceilfct{x(N-1) + \Delta_N}, \dotsc, N-1 } \\
    V_y &:=  \curbr*{ 0, \dotsc, \floorfct{yN - \Delta_N}, \ceilfct{yN + \Delta_N}, \dotsc, N }.
\end{align*}
the right-hand side of eq.~\eqref{eq:bernstein_copula_ineq} can be expressed as
\[ \abs*{ K_{\B_N(A)}(x, [0, y]) - K_A(x, [0, y]) } \leq S_1(x, y) + S_2(x, y), \]
with
\begin{align*}
    S_1(x, y) &= \sum_{i \in U_x \lor j \in V_y} p_{N-1, i-1}(x) p_{N, j}(y)  \abs*{ K_A \rb*{ \xi_{i, j}, \cb*{0, \tfrac{j}{N} } } - K_A(x, [0, y]) } \\
    S_2(x, y) &= \sum_{i \notin U_x, j \notin V_y} p_{N-1, i-1}(x) p_{N, j}(y)  \abs*{ K_A \rb*{ \xi_{i, j}, \cb*{0, \tfrac{j}{N} } } - K_A(x, [0, y]) }.
\end{align*}
By Lemma~\ref{lem:bernstein_chernoff} we have that 
$S_1(x, y) \leq 2 \exp \rb*{ - 2 N^{1-2r} } < \tfrac{\epsilon}{2} $ for every $(x, y) \in \I^2$ and $N \geq N_0$. 
The sum in $S_2(x, y)$ only contains indices satisfying $\abs*{ \xi_{i, j} - x } + \abs*{ y - \tfrac{j}{N}} < \delta$, implying $S_2(x, y) < \tfrac{\epsilon}{2}$. 
Altogether this shows that for every $N \geq N_0$ we have
\[ \abs*{ K_{\B_N(A)} ( x, [0, y] ) - K_A(x, [0, y] } < \epsilon. \]
Since $(x, y)$ was arbitrary and $N_0$ only depends on $\varepsilon$, the proof is complete.
\end{proof}

The next lemma is the Bernstein version of Lemma~\ref{lem:checkerboard_convergence}.
We omit the proof since it is analogous to the one of  Lemma~\ref{lem:checkerboard_convergence}, we just use ineq.~\eqref{ineq:bern} 
instead of ineq.~\eqref{ineq:check}.

\begin{lemma}
Let $A \in \Copula$ be a copula allowing a continuous Markov kernel $K_A$. 
Setting $N(n) := \floorfct{n^s}$ for some $s \in \rb*{0, \tfrac{1}{2}}$, we have that
\[ \lim_{n \rightarrow \infty}\sup_{(x, y) \in \I^2} \abs*{ K_{\B_{N(n)}(E_n)}(x, [0, y]) - K_A(x, [0, y]) }= 0 \]
almost surely. In other words: the sequence $(\B_{N(n)}(E_n))_{n \in \mathbb{N}}$ 
converges uniformly conditional to $A$ almost surely.  
\end{lemma}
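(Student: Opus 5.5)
The plan is to mirror the proof of Lemma~\ref{lem:checkerboard_convergence}, splitting the error into a deterministic approximation term and a stochastic plug-in term. For every $(x,y) \in \I^2$ the triangle inequality gives
\begin{align*}
\abs*{ K_{\B_{N(n)}(E_n)}(x, [0, y]) - K_A(x, [0, y]) } &\leq \abs*{ K_{\B_{N(n)}(E_n)}(x, [0, y]) - K_{\B_{N(n)}(A)}(x, [0, y]) } \\
&\quad + \abs*{ K_{\B_{N(n)}(A)}(x, [0, y]) - K_A(x, [0, y]) }.
\end{align*}
We then take the supremum over $(x,y) \in \I^2$ on both sides.

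For the second summand, $N(n) = \floorfct{n^s} \to \infty$ as $n \to \infty$. Lemma~\ref{lem:bernstein_convergence} therefore applies along the sequence $N(n)$, since $A$ allows a continuous Markov kernel. This term is deterministic, so its supremum tends to $0$.

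For the first summand, apply ineq.~\eqref{ineq:bern} with $A_1 = E_n$ and $A_2 = A$. This gives the bound $2N(n)\, d_\infty(E_n, A)$, uniformly in $(x,y)$. One point needs checking: ineq.~\eqref{ineq:bern} holds for arbitrary copulas, and $E_n$, being the bilinear extension of the empirical subcopula, is a genuine copula. The bound then follows from the first line of eq.~\eqref{eq:markov_kernel_bernstein}, because $\sum_{i,j} (p_{N-1,i-1}(x) + p_{N-1,i}(x))\, p_{N,j}(y) \leq 2$.

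Next, invoke the almost sure rate from \citet{janssen12}, namely $d_\infty(E_n,A) = O\bigl(\sqrt{\log(\log(n))/n}\bigr)$. Together with $N(n) \leq n^s$ this yields
\[ 2N(n)\, d_\infty(E_n,A) = O\bigl( n^{s-1/2} \sqrt{\log(\log(n))} \bigr) \]
almost surely. The right-hand side tends to $0$ because $s < \tfrac{1}{2}$.

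The only real obstacle is the mild care needed for the almost-sure statement. The exceptional null set comes only from the law of the iterated logarithm type bound of \citet{janssen12}, while the deterministic term is handled for all $\omega$. Hence the conclusion holds on a single event of probability one, and the stated uniform conditional convergence follows.
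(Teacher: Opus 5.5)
Your proposal is correct and is exactly the argument the paper intends. The paper omits the proof as analogous to Lemma~\ref{lem:checkerboard_convergence}: the same triangle-inequality split, Lemma~\ref{lem:bernstein_convergence} for the deterministic term, and ineq.~\eqref{ineq:bern} combined with the almost sure $O(\sqrt{\log\log n/n})$ rate for $d_\infty(E_n,A)$. Your explicit check of the constant $2$ in ineq.~\eqref{ineq:bern} via the first line of eq.~\eqref{eq:markov_kernel_bernstein} is a useful detail the paper leaves to the reader.
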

%
%

We conclude this section by stating its main result, whose proof again 
is analogous to the one of Theorem~\ref{thm:uniform_convergence}. 
\begin{theorem}\label{thm:uniform_convergence_bernstein}
Suppose that $(X_1, Y_1), (X_2, Y_2), \dotsc$ is a random sample from $(X,Y)$ and assume 
that $(X,Y)$ has continuous distribution function $H$, mar\-gi\-nals $F$ and $G$, and underlying (unique) copula $A$. Furthermore suppose that $A$ allows a continuous Markov kernel $K_A$ and define $K_H$ according to eq.~\eqref{eq:sklar.for.kernels}. 
Setting $N(n) := \floorfct{n^s}$ for some $s \in \rb*{0, \frac{1}{2}}$, we have that
\begin{equation}\label{eq:uniform_convergence_bernstein}
    \lim_{n \rightarrow \infty}  \sup_{(x, y) \in \R^2} \abs*{K_{\B_{N(n)}(E_n)}(F_n(x), [0, G_n(y)]) - K_H(x, (-\infty, y])} =0
\end{equation}
almost surely. 
\end{theorem}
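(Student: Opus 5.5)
The plan mirrors the proof of Theorem~\ref{thm:uniform_convergence}. We start from eq.~\eqref{eq:sklar.for.kernels}, which gives $K_H(x,(-\infty,y]) = K_A(F(x),[0,G(y)])$. Adding and subtracting $K_A(F_n(x),[0,G_n(y)])$ and applying the triangle inequality bounds the supremum in eq.~\eqref{eq:uniform_convergence_bernstein} by two terms:
\begin{align*}
&\sup_{(x,y)\in\R^2}\abs*{K_{\B_{N(n)}(E_n)}(F_n(x),[0,G_n(y)]) - K_A(F_n(x),[0,G_n(y)])} \\
&\quad + \sup_{(x,y)\in\R^2}\abs*{K_A(F_n(x),[0,G_n(y)]) - K_A(F(x),[0,G(y)])}.
\end{align*}

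For the first term, note that $(F_n(x),G_n(y))\in\I^2$ for all $(x,y)\in\R^2$. The term is therefore dominated by $\sup_{(u,v)\in\I^2}\abs{K_{\B_{N(n)}(E_n)}(u,[0,v]) - K_A(u,[0,v])}$, which tends to zero almost surely by the preceding (Bernstein) lemma. That lemma itself follows in two steps. First, split off $K_{\B_{N(n)}(A)}$ and apply ineq.~\eqref{ineq:bern} to get the bound $2N(n)d_\infty(E_n,A)$. This vanishes almost surely, since $N(n)\le n^s$ with $s<\tfrac12$ and $d_\infty(E_n,A)=O(\sqrt{\log\log n/n})$ almost surely by \citet{janssen12}. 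Second, handle the deterministic remainder $K_{\B_{N(n)}(A)}$ versus $K_A$ with Lemma~\ref{lem:bernstein_convergence}, using that $N(n)\to\infty$.

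For the second term, $(u,v)\mapsto K_A(u,[0,v])$ is continuous on the compact set $\I^2$ and hence uniformly continuous. Given $\epsilon>0$, choose $\delta>0$ such that $\abs{u_1-u_2}+\abs{v_1-v_2}<\delta$ implies that the kernel values differ by less than $\epsilon$. By the Glivenko--Cantelli theorem there is a null set outside of which $\sup_x\abs{F_n(x)-F(x)}+\sup_y\abs{G_n(y)-G(y)}<\delta$ for all sufficiently large $n$. The second term is then below $\epsilon$ for such $n$. Intersecting the countably many almost-sure events (one for each rational $\epsilon$, plus the one from the first term) gives almost sure convergence to zero of the sum.

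No step is a real obstacle, since all ingredients are already available. The only point requiring care is that the supremum over $\R^2$ is reduced to a supremum over $\I^2$. This is legitimate because $F_n,G_n,F,G$ all take values in $\I$, and the relevant bounds (ineq.~\eqref{ineq:bern} and uniform continuity) are uniform over all of $\I^2$, not only over the range of the empirical marginals.
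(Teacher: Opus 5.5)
Your proposal is correct and is essentially the paper's own argument. The paper proves this theorem exactly as it proves Theorem~\ref{thm:uniform_convergence}: it splits at $K_A(F_n(x),[0,G_n(y)])$, handles the first term via ineq.~\eqref{ineq:bern}, the almost-sure rate of \citet{janssen12} and Lemma~\ref{lem:bernstein_convergence}, and handles the second term via uniform continuity of $K_A$ on $\I^2$ together with the Glivenko--Cantelli theorem.
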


Considering that for every copula $A$ and every $N \in \N$ the function 
$(x,y) \mapsto K_{\B_{N}(A)}(x,[0,y])$ is continuous the subsequent proposition 
follows immediately. 
\begin{proposition}
Let $A \in \Copula$ be a copula and $K_A$ be a version of the Markov kernel, such that the function $(x, y) \mapsto K_A(x, [0, y])$ has a discontinuity. Then, the sequence 
$(\Delta_N)_{N \in \N}$, given by
\[ \Delta_N := \sup_{(x, y) \in \I^2} \abs*{K_{\B_{N}(A)}(x, [0, y]) - K_A(x, [0,y])} \]
does not converge to $0$ for $N \rightarrow \infty$.
\end{proposition}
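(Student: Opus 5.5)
The plan is to argue by contradiction, exploiting the fact that a uniform limit of continuous functions is continuous. The one subtlety is that Markov kernels are only unique $\lambda$-almost everywhere, so a discontinuity of one particular version $K_A$ need not rule out a continuous version. For this reason the step that needs checking is not a limit argument but a uniqueness step, and I will carry out the plan in three steps.

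\emph{Step 1.} Suppose, contrary to the claim, that $\Delta_N \to 0$. Each function $f_N(x,y) := K_{\B_N(A)}(x,[0,y])$ is continuous on $\I^2$. This follows from eq.~\eqref{eq:markov_kernel_bernstein}, since $f_N$ is a finite sum of products of polynomials in $x$ and in $y$. The functions $f_N$ converge uniformly on $\I^2$ to $f(x,y) := K_A(x,[0,y])$, and a uniform limit of continuous functions is continuous, so $f$ is continuous on $\I^2$.

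\emph{Step 2.} Step 1 directly contradicts the assumption that $(x,y)\mapsto K_A(x,[0,y])$ has a discontinuity. Note that the statement is formulated for the given version $K_A$, and $\Delta_N$ is computed with that same version, so no argument about changing versions is needed. Hence $\Delta_N \not\to 0$.

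\emph{Step 3.} As a remark, or as the main point to double-check, I will make sure the statement is not weakened by the choice of version. Uniqueness of a continuous version is not required here, since the conclusion concerns exactly the given $K_A$. If one wanted the stronger statement that no version gives convergence whenever $A$ admits no continuous kernel, Step 1 would again produce a continuous version, which is impossible. This is precisely why the paper says the proposition ``follows immediately''. I expect no real obstacle beyond verifying the continuity of $f_N$, which is immediate from the polynomial form of eq.~\eqref{eq:markov_kernel_bernstein}.
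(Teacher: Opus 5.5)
Your proof is correct and follows the same route as the paper. The paper notes that each map $(x,y)\mapsto K_{\B_N(A)}(x,[0,y])$ is continuous, since it is a polynomial. Uniform convergence would therefore make $(x,y)\mapsto K_A(x,[0,y])$ continuous on $\I^2$, which contradicts the hypothesis.
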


\section{Consequences to Nonparametric Regression}\label{sec:regression}

Under the mild regularity condition that the copula $A$ allows a continuous version of the Markov kernel, Theorem~\ref{thm:uniform_convergence} assures not only weak but even 
uniform convergence of the estimated conditional distribution functions to the true ones. 
Not surprisingly, this has direct consequences to functionals of the 
conditional distribution, 
including in particular mean, quantile, and expectile regression as well as
conditional variance and beyond. 
In the sequel we will focus on nonparametric regression, the obtained results on mean and quantile regression are an extension of the bivariate results in \citet{kaiser25} to the general distributional case. 
Doing so we will only work with the empirical checkerboard approximations, 
using Theorem~\ref{thm:uniform_convergence_bernstein} all results, however, also hold for the empirical Bernstein approximation.

For the remainder of this section we consider a pair $(X,Y)$ of random variables 
with continuous distribution function $H$ and let $K_H$ denote the Markov kernel according to eq. \eqref{eq:sklar.for.kernels}. 

\subsection{Mean regression}\label{subsec:mean_regression}
Recall that the conditional expectation can be calculated in terms of integrals of 
the conditional distribution function. In fact,  
assuming that $Z$ having distribution $K_H(x,\cdot)$ is integrable we have  
\begin{align*}
r_H(x)&:= \E{ Y|X = x } = \int_{(0,\infty)} 1-K_H(x, (-\infty,t]) \dd{\lambda(t)}  \\
  & \quad - \int_{(-\infty,0)} K_H(x, (-\infty, t] \dd{\lambda(t)}. 
\end{align*}
Theorem~\ref{thm:uniform_convergence} implies that we can consistently estimate 
the regression function without further ado as long as it is possible to 
interchange limit and 
integral. The following technical condition guarantees validity of the interchange.
\begin{assumption}\label{def:reg}
For $x \in \mathbb{R}$ let the random variable $Z_n^{(x)}$ have distribution function $K_{\CB_{N(n)}(E_n)}(F_n(x), [0, G_n(\cdot)])$. We say that the regularity 
assumption holds in a point $x \in \mathbb{R}$ if the sequence 
$(Z_n^{(x)})_{n \in \mathbb{N}}$ is uniformly integrable. 
\end{assumption}

The following remark shows that uniform integrability is not too strict and 
holds, e.g., in the case of uniform boundedness.
\begin{remark}\label{rmk:UI}
\emph{
Suppose that $Z,Z_1,Z_2,\ldots$ are random variables with 
$\vert Z_n \vert \leq Z$ almost surely for every $n \in \N$. 
Then obviously the sequence $(Z_n)_{n \in \N}$ is uniformly integrable if $Z$ is integrable. 
Returning to our setting of $(X,Y)$ with continuous distribution function $H$, 
if the random variable $Y$ is bounded, 
then we have that $(Z_n^{(x)})_{n \in \mathbb{N}}$ is uniformly integrable, which can 
be shown
as follows: Assume that $\vert Y \vert \leq a$ almost surely and that 
$(X_1,Y_1),(X_2,Y_2),\ldots$ is a random sample from $(X,Y)$. 
Then obviously we have 
$G_n(a)=G(a)=1$ as well as $G_n(-a)=G(-a)=0$ almost surely for every $n \in \mathbb{N}$, implying that for every $x \in \mathbb{R}$ 
\begin{align*}
K_{\CB_{N(n)}(E_n)}(F_n(x), [0, G_n(a)])&=K_{\CB_{N(n)}(E_n)}(F_n(x), [0,1])=1  \\ 
K_{\CB_{N(n)}(E_n)}(F_n(x), [0, G_n(-a)])&=K_{\CB_{N(n)}(E_n)}(F_n(x), \{0\})=0    
\end{align*}
hold almost surely. As a direct consequence, for each $x \in \mathbb{R}$
the sequence $(Z_n^{(x)})_{n \in \mathbb{N}}$ is bounded from above by $a$ 
and from below by $-a$, and therefore uniformly integrable.} 
\end{remark}

\begin{theorem}\label{cor:mean_reg}
Suppose that $(X_1, Y_1), (X_2, Y_2), \dotsc$ is a random sample from $(X,Y)$ and assume 
that $(X,Y)$ has continuous distribution function $H$, mar\-gi\-nals $F$ and $G$, and underlying (unique) copula $A$. Furthermore suppose that $A$ allows a continuous Markov kernel $K_A$, define $K_H$ according to eq.~\eqref{eq:sklar.for.kernels} and set $N(n) := \floorfct{n^s}$ for some $s \in \rb*{0, \frac{1}{2}}$. 
Then for every $x \in \R$ in which the regularity assumption holds we have 
\begin{equation*}
    r_n(x) := r_n^+(x) - r_n^-(x) \to \E{ Y \mid X = x}
\end{equation*}
almost surely as $n \to \infty$, where
\begin{align*}
    r_n^+(x) &:= \int_{(0,\infty)} 1 - K_{\CB_{N(n)}(E_n)}(F_n(x), [0, G_n(y)]) \dd{\lambda(y)}, \\
    r_n^-(x) &:= \int_{(-\infty,0)} K_{\CB_{N(n)}(E_n)}(F_n(x), [0, G_n(y)]) \dd{\lambda(y)}.
\end{align*}
\end{theorem}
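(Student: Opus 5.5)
Fix $x \in \R$ in which the regularity assumption holds, and work on the almost sure event $\Omega_0$ on which the convergence in Theorem~\ref{thm:uniform_convergence} holds. Fix $\omega \in \Omega_0$. Write $D_n(y) := K_{\CB_{N(n)}(E_n)}(F_n(x), [0, G_n(y)])$ and $D(y) := K_H(x,(-\infty,y])$.

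First we check that $D_n$ is a distribution function, so that the variables $Z_n^{(x)}$ in Assumption~\ref{def:reg} make sense. It is non-decreasing and right-continuous, since $G_n$ is and the checkerboard kernel is continuous in its second argument. Moreover $D_n(y)=0$ for $y$ below the sample minimum and $D_n(y)=1$ above the sample maximum.

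With this in hand, the standard representation of the mean via the distribution function gives
\[
r_n(x) = \E{Z_n^{(x)}}, \qquad \E{Y \mid X=x} = \E{Z^{(x)}},
\]
where $Z^{(x)} \sim K_H(x,\cdot)$. Theorem~\ref{thm:uniform_convergence} yields $\sup_y |D_n(y)-D(y)| \to 0$. In particular $D_n(y)\to D(y)$ for every $y$, so $Z_n^{(x)} \to Z^{(x)}$ in distribution.

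The remaining step is the passage from convergence in distribution to convergence of expectations, which uses uniform integrability. By Skorokhod's representation, realise $\tilde Z_n \to \tilde Z$ almost surely on an auxiliary probability space, with the same marginal laws as $Z_n^{(x)}$ and $Z^{(x)}$. Uniform integrability of $(\tilde Z_n)$ is inherited from Assumption~\ref{def:reg}, and Vitali's convergence theorem then gives $\E{\tilde Z_n} \to \E{\tilde Z}$. Integrability of $Z^{(x)}$ also follows, from Fatou's lemma and $\sup_n \E{|Z_n^{(x)}|}<\infty$.

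A more hands-on alternative works with $r_n^\pm$ directly. Split $\int_{(0,\infty)}$ into $(0,M]$ and $(M,\infty)$. On $(0,M]$, uniform convergence gives
\[
\int_{(0,M]} |D_n - D| \dd\lambda \le M\sup_y|D_n(y)-D(y)| \to 0 .
\]
For the tail, we would bound $\int_{(M,\infty)}(1-D_n)\dd\lambda \le \E{Z_n^{(x)}\ind{\{Z_n^{(x)}>M\}}}$, which is uniformly small in $n$ by uniform integrability. The tail of $D$ is handled analogously, and $r_n^-$ is treated symmetrically.

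The main subtlety is that uniform integrability is required pathwise, that is, for the fixed realisation $\omega$, and this is how we interpret Assumption~\ref{def:reg} (the $Z_n^{(x)}$ depend on the sample). Once that reading is fixed, the tail-splitting argument closes directly.
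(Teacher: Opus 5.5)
Your proposal is correct and follows the same route as the paper, whose proof is the one-line observation that Theorem~\ref{thm:uniform_convergence}, combined with the uniform integrability from Assumption~\ref{def:reg}, permits interchanging limit and integral; you make this explicit by checking that $D_n$ is a distribution function and then using Skorokhod plus Vitali, or equivalently the truncation at $M$. Your pathwise reading of Assumption~\ref{def:reg} and the Fatou argument for integrability of $Z^{(x)}$ are sensible clarifications of points the paper leaves implicit.
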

\begin{proof}
Direct consequence of Theorem \ref{thm:uniform_convergence} and the fact that 
uniform integrability allows to interchange limit and integral. 
\end{proof}

Since the empirical distribution functions $F_n,G_n$ are step functions, the 
two integrals in Theorem \ref{cor:mean_reg} boil down to sums as the following remark 
shows.
\begin{remark}\label{rmk:mean_reg_big_o}
\emph{
Let $Y^{(k)}$ denote the $k$-th order statistic and consider $Y_+^{(k)} := \max\curbr{0, Y^{(k)}}$ and $Y_-^{(k)} := \max\curbr{0, -Y^{(k)}}$. 
Then the two integrals in Theorem \ref{cor:mean_reg} calculate to 
\begin{align*}
  r_n^+(x) &= \sum_{i=1}^{n-1} \rb*{Y_+^{(i+1)} - Y_+^{(i)}} \rb*{ 1 - K_{\CB_{N(n)}(E_n)} \rb*{F_n(x), \cb*{0, G_n\rb*{Y^{(i)}}}} } \nonumber \\
  &= \sum_{i=1}^{n-1} \rb*{Y_+^{(i+1)} - Y_+^{(i)}} \rb*{ 1 - K_{\CB_{N(n)}(E_n)} \rb*{F_n(x), \cb*{0, \tfrac{i}{n} }}} 
\end{align*}  
as well as 
\begin{align*}
 r_n^-(x) &= \sum_{i=1}^{n-1} \rb*{Y_-^{(i)} - Y_-^{(i+1)}} \rb*{ K_{\CB_{N(n)}(E_n)} \rb*{F_n(x), \cb*{0, G_n\rb*{Y^{(i)}}}} } \\
  &= \sum_{i=1}^{n-1} \rb*{Y_-^{(i)} - Y_-^{(i+1)}} \rb*{ K_{\CB_{N(n)}(E_n)} \rb*{F_n(x), 
\cb*{0, \tfrac{i}{n} }} } 
\end{align*}
Considering that $r_n$ is a step function attaining at most $N(n)$ different values 
allows us to compute the estimated mean regression very efficiently. 
Indeed, we can compute the $N(n)$ different values in $\bigO{N(n)n}$. 
In order to evaluate $m \in \N$ points therefore requires a time complexity of 
$\bigO{m + N(n)n}$. Other standard well-known methods for nonparametric 
mean regression like, e.g., the Nadaraya Watson estimator needs 
$\bigO{mn}$ to evaluate $m$ points. 
This implies that in case of $m \geq N(n)$ our proposed method is computationally 
much faster.}
\end{remark}


\subsection{Quantile regression}\label{subsec:quantile_regression}
Next we focus on quantile regression and utilize the empirical checkerboard approximation 
to nonparametrically estimate quantile regression functions. 
For $x \in \R$ and quantile level $\tau \in (0, 1)$ define the (interval-valued)  
$\tau$-quantile of a univariate distribution function $F$ according to \citep{jordan22} by
\begin{equation}
\begin{aligned}
    \cb*{ \underline{q}^\tau, \overline{q}^\tau } := \cb*{ \sup \curbr*{ y \in \R: F(y) < \tau }, \inf \curbr*{ y \in \R: F(y) > \tau } }.
\end{aligned}
\end{equation}
Notice that $\cb*{ \underline{q}^\tau, \overline{q}^\tau }$ is an interval with positive length if, and only if there exist points $y_1<y_2$ such that $F(y)=\tau$ for every 
$y \in (y_1,y_2)$.
The following simple lemma will be used in the sequel: 
\begin{lemma}\label{lem:quant_reg}
Suppose that $F, F_1, F_2, \dotsc$ are univariate distribution functions, 
such that $\rb*{F_n}_{n \in \N}$ converges weakly to $F$ and denote the 
corresponding $\tau$-quantiles by  
$\cb*{ \underline{q}^\tau, \overline{q}^\tau }$, $\cb*{ \underline{q}_1^\tau, \overline{q}_1^\tau },\cb*{ \underline{q}_2^\tau, \overline{q}_2^\tau }\ldots$.  
Then we have 
\[ \lim_{n \rightarrow \infty}\, \max_{ y_n \in \cb*{ \underline{q}_n^\tau, \overline{q}_n^\tau } } \min_{ y \in \cb*{ \underline{q}^\tau, \overline{q}^\tau } } \abs*{ y_n - y } =0. \]
\end{lemma}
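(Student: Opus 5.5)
We will show that, for every $\epsilon>0$, the whole interval $\cb*{ \underline{q}_n^\tau, \overline{q}_n^\tau }$ is eventually contained in $(\underline{q}^\tau-\epsilon, \overline{q}^\tau+\epsilon)$. Once this holds, the distance from every $y_n$ in the $n$-th quantile interval to the closed interval $\cb*{ \underline{q}^\tau, \overline{q}^\tau }$ is smaller than $\epsilon$. This gives the claim, since the maximum is attained: the interval is compact, and $y_n \mapsto \min_y |y_n-y|$ is continuous. Because of the interval structure, it suffices to prove two things:
\[ \liminf_{n\to\infty} \underline{q}_n^\tau \geq \underline{q}^\tau \quad\text{and}\quad \limsup_{n\to\infty} \overline{q}_n^\tau \leq \overline{q}^\tau .\]
Both quantities are finite, since $\tau\in(0,1)$.

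\textbf{Upper bound on $\overline{q}_n^\tau$.} Fix $\epsilon>0$. By definition of $\overline{q}^\tau$ as an infimum, there is a point $y>\overline{q}^\tau$, $y<\overline{q}^\tau+\epsilon$, with $F(y)>\tau$. Indeed, $\{F>\tau\}$ is an up-set, so every $y>\overline{q}^\tau$ satisfies $F(y)>\tau$. The set of discontinuity points of $F$ is countable, so we may choose such a $y$ that is also a continuity point of $F$. Weak convergence then gives $F_n(y)\to F(y)>\tau$, hence $F_n(y)>\tau$ for all sufficiently large $n$, and therefore $\overline{q}_n^\tau \leq y < \overline{q}^\tau+\epsilon$.

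\textbf{Lower bound on $\underline{q}_n^\tau$.} This is symmetric. Pick a continuity point $y\in(\underline{q}^\tau-\epsilon, \underline{q}^\tau)$ of $F$. Since $y<\underline{q}^\tau$ and $\{F<\tau\}$ is a down-set, we get $F(y)<\tau$. Then $F_n(y)<\tau$ eventually, so $\underline{q}_n^\tau\geq y>\underline{q}^\tau-\epsilon$.

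Combining the two bounds, for all large $n$ the interval $\cb*{ \underline{q}_n^\tau, \overline{q}_n^\tau }$ lies inside $(\underline{q}^\tau-\epsilon,\overline{q}^\tau+\epsilon)$, which finishes the proof.

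The main obstacle is bookkeeping at the endpoints of the quantile intervals. The strict inequalities in the sup/inf definitions are needed so that the chosen continuity points give strict inequalities for $F$, and hence for $F_n$ eventually. Beyond that, one only has to check carefully the monotonicity facts used above, namely that $\{F>\tau\}$ is an up-set and $\{F<\tau\}$ is a down-set. Note also that only the one-sided containment is claimed: the quantile interval of $F$ may be strictly larger than the limit of those of $F_n$, so no converse bound is needed.
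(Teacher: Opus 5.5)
Your proof is correct and follows essentially the same route as the paper. Both arguments pick points just outside $[\underline{q}^\tau, \overline{q}^\tau]$ at which $F$ lies strictly below or above $\tau$, transfer these strict inequalities to $F_n$ via weak convergence, and conclude that $[\underline{q}_n^\tau, \overline{q}_n^\tau] \subseteq [\underline{q}^\tau-\epsilon, \overline{q}^\tau+\epsilon]$ for all large $n$; your explicit choice of continuity points of $F$ is a slightly more careful version of the paper's appeal to convergence on a dense subset combined with its $\epsilon/2$-versus-$\epsilon$ monotonicity step.
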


\begin{proof}
Let $\epsilon > 0$ be arbitrary but fixed. By definition of the $\tau$-quantile 
we obviously have $F\rb*{ \underline{q}^\tau - \frac{\epsilon}{2}} < \tau < F\rb*{ \overline{q}^\tau + \frac{\epsilon}{2} }$. 
Using the fact that weak convergence is equivalent to convergence on a dense subset 
of $\mathbb{R}$ we can find some $n_0 \in \N$, such 
that $F_n\rb*{ \underline{q}^\tau - \epsilon } < \tau < F_n\rb*{ \overline{q}^\tau + \epsilon}$ for every $n \geq n_0$. 
This shows 
\[ \underline{q}^\tau - \epsilon \leq \underline{q}_n^\tau \leq \overline{q}_n^\tau \leq \overline{q}^\tau + \epsilon, \]
as well as
$$
\max_{ y_n \in \cb*{ \underline{q}_n^\tau, \overline{q}_n^\tau } } \min_{ y \in \cb*{ \underline{q}^\tau, \overline{q}^\tau } } \abs*{ y_n - y } \leq \varepsilon
$$
for every $n \geq n_0$. Since $\epsilon$ was arbitrary, the proof is complete. 
\end{proof}
We can now show that the empirical checkerboard approximation 
directly leads to a consistent estimator of the $\tau$-quantile regression function 
$x \mapsto Q_H^\tau(x) =\cb*{\underline{q}^\tau(x), \overline{q}^\tau(x)} $, given by 
\begin{align}\label{eq:lower.upper.quantile}
    \underline{q}^\tau(x)& := \sup \curbr*{ y \in \R: K_H(x,(-\infty,y]) < \tau } \nonumber\\ 
    \overline{q}^\tau(x)& := \inf \curbr*{ y \in \R: K_H(x,(-\infty,y]) > \tau },
\end{align}
where $K_H$ is again defined according to eq. \eqref{eq:sklar.for.kernels}. 
To simplify notation we will write 
$ Q_n^\tau(x) := \cb*{\underline{q}_n^\tau(x), \overline{q}_n^\tau(x)} $
with 
\begin{align}\label{eq:lower.upper.quantile.sample}
    \underline{q}_n^\tau(x)& := \sup \curbr*{ y \in \R: K_{\CB_{N(n)}(E_n)}(F_n(x), [0, G_n(y)]) < \tau } \nonumber \\ 
    \overline{q}_n^\tau(x)& := \inf \curbr*{ y \in \R: K_{\CB_{N(n)}(E_n)}(F_n(x), [0, G_n(y)]) > \tau }.
\end{align}
Combining Theorem~\ref{thm:uniform_convergence} and Lemma \ref{lem:quant_reg} 
directly yields the following result:  
\begin{theorem}\label{cor:quant_reg}
Suppose that $(X_1, Y_1), (X_2, Y_2), \dotsc$ is a random sample from $(X,Y)$ and assume 
that $(X,Y)$ has continuous distribution function $H$, mar\-gi\-nals $F$ and $G$, and underlying (unique) copula $A$. Furthermore suppose that $A$ allows a continuous Markov kernel $K_A$, define $K_H$ according to eq.~\eqref{eq:sklar.for.kernels} and set $N(n) := \floorfct{n^s}$ for some $s \in \rb*{0, \frac{1}{2}}$.  Then, for every 
$x \in \mathbb{R}$ the identity 
\[ \lim_{n \rightarrow \infty}\, \max_{ y_n \in \cb*{ \underline{q}_n^\tau(x), \overline{q}_n^\tau(x) } } \min_{ y \in \cb*{ \underline{q}^\tau(x), \overline{q}^\tau(x) } } \abs*{ y_n - y } =0 \]
holds almost surely. 
\end{theorem}

\subsection{Expectile Regression}

As third and last consequence of Theorem \ref{thm:uniform_convergence} we derive 
a novel method of expectile regression. To the best of our knowledge, there are not many 
established non-parametric expectile regression methods available, our approach may 
therefore be a valuable contribution filling a gap. \\
Consider $x \in \R$ and let $\alpha \in (0, 1)$ be the expectile level. 
Recall that for a random variable $Y$ the $\alpha$-expectile is defined as the 
unique $e_\alpha \in \mathbb{R}$ satisfying
\[ \alpha \E{ (Y - e_\alpha)_+} = (1 - \alpha) \E{ (e_\alpha - Y)_+ }. \]
Returning to our bivariate setting, considering a pair $(X,Y)$ of random variables 
with continuous distribution function $H$, and letting $K_H$ be defined 
according to eq. \eqref{eq:sklar.for.kernels} by definition we have that the conditional 
$\alpha$-expectile $e_\alpha(x)$ of $Y$ given $X=x$ is the unique solution of the
equation 
\[ \alpha \int_{e_\alpha(x)}^{\infty} 1 - K_H(x, \cbr{-\infty, y}) \dd{\lambda(y)} = (1 - \alpha) \int_{-\infty}^{e_\alpha(x)} K_H (x, \cbr{-\infty, y}) \dd{\lambda(y)}. \]

\begin{theorem}
Suppose that $(X_1, Y_1), (X_2, Y_2), \dotsc$ is a random sample from $(X,Y)$ and assume 
that $(X,Y)$ has continuous distribution function $H$, mar\-gi\-nals $F$ and $G$, and underlying (unique) copula $A$. Furthermore suppose that $A$ allows a continuous Markov kernel $K_A$, define $K_H$ according to eq.~\eqref{eq:sklar.for.kernels} and set $N(n) := \floorfct{n^s}$ for some $s \in \rb*{0, \frac{1}{2}}$. 
For $x \in \R$ fulfilling the regularity assumption and every $n \in \mathbb{N}$ 
define $t_n \in \mathbb{R}$ as the solution of 
\begin{multline*}
    \alpha \int_{t_n}^{\infty} 1 - K_{\CB_{N(n)}(E_n)}(F_n(x), \cb{0, G_n(y)}) \dd{\lambda(y)} \\
    = (1 - \alpha) \int_{-\infty}^{t_n} K_{\CB_{N(n)}(E_n)}(F_n(x), \cb{0, G_n(y)} \dd{\lambda(y)}.
\end{multline*}
Then we have that $(t_n)_{n \in \mathbb{N}}$ converges to $e_\alpha(x)$ almost surely. 
\end{theorem}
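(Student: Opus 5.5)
Write $K_n(y) := K_{\CB_{N(n)}(E_n)}(F_n(x), [0, G_n(y)])$ and $K(y) := K_H(x,(-\infty,y])$. The plan is to view the expectile as the unique zero of a strictly monotone function and to show that the empirical version of this function converges to the true one locally uniformly. Define
\[ \psi_n(t) := \alpha \int_t^\infty (1-K_n) \dd\lambda - (1-\alpha)\int_{-\infty}^t K_n \dd\lambda, \qquad \psi(t) := \alpha \int_t^\infty (1-K) \dd\lambda - (1-\alpha)\int_{-\infty}^t K \dd\lambda . \]
Then $t_n$ and $e_\alpha(x)$ are zeros of $\psi_n$ and $\psi$, respectively. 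Each of these functions is continuous and non-increasing, and its right derivative is $-\alpha(1-K(t)) - (1-\alpha)K(t) \le -\min(\alpha,1-\alpha) < 0$. So $\psi$ is strictly decreasing with slope bounded away from zero, and the same holds for every $\psi_n$. In particular $t_n$ is well defined as soon as $\psi_n$ is finite, which uniform integrability guarantees.

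\textbf{Step 1 (pointwise convergence of $\psi_n$).} Fix $t$ and note that $\psi_n(t) = \alpha\, \E{(Z_n^{(x)}-t)_+} - (1-\alpha)\, \E{(t-Z_n^{(x)})_+}$, where $Z_n^{(x)}$ is as in the regularity assumption. By Theorem~\ref{thm:uniform_convergence}, almost surely $K_n \to K$ uniformly on $\R$, so $Z_n^{(x)}$ converges in distribution to a variable $Z$ with distribution $K_H(x,\cdot)$. The maps $z\mapsto (z-t)_\pm$ are continuous and dominated by $|z|+|t|$. Uniform integrability of $(Z_n^{(x)})$ therefore gives convergence of both expectations, exactly as in the proof of Theorem~\ref{cor:mean_reg}, and hence $\psi_n(t)\to\psi(t)$ almost surely.

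\textbf{Step 2 (convergence of the zeros).} This step is elementary monotonicity and needs no uniform convergence. Fix $\epsilon>0$. Strict monotonicity gives $\psi(e_\alpha(x)-\epsilon) > 0 > \psi(e_\alpha(x)+\epsilon)$. By Step 1, applied at these two points only, for all large $n$ we get $\psi_n(e_\alpha(x)-\epsilon) > 0 > \psi_n(e_\alpha(x)+\epsilon)$. Since $\psi_n$ is decreasing, its zero satisfies $|t_n - e_\alpha(x)| < \epsilon$. The almost-sure qualifier is handled by working on the probability-one event from Theorem~\ref{thm:uniform_convergence}, and taking $\epsilon$ rational keeps the union of exceptional sets countable.

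\textbf{Main obstacle.} The delicate point is Step 1: the interchange of limit and integral over unbounded domains. On the event where the uniform convergence holds, the uniform integrability must be understood pathwise, that is, for the conditional laws given the sample. I would argue it via the tail bound
\[ \sup_n \int_{|y|>M} \min(K_n, 1-K_n)\dd\lambda \to 0 \quad \text{as } M\to\infty, \]
which is equivalent to uniform integrability of $(Z_n^{(x)})$. Combined with dominated convergence on $[-M,M]$, where $K_n\to K$ uniformly, this yields convergence of the integrals of $K_n$ and $1-K_n$ over $(-\infty,t]$ and $[t,\infty)$, respectively. Given this, the rest of the argument is routine.
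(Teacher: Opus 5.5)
Your proof is correct, but it takes a different route from the paper. The paper plugs $t_n$ into the \emph{true} function $\psi$ and uses $\psi_n(t_n)=0$. This bounds $|\psi(t_n)|=|\psi(t_n)-\psi_n(t_n)|$ by $\alpha\,|\int_{t_n}^\infty (K_n-K)\dd\lambda|+(1-\alpha)\,|\int_{-\infty}^{t_n}(K_n-K)\dd\lambda|$. The paper asserts that these terms vanish and concludes $t_n\to e_\alpha(x)$ because $c\mapsto|\psi(c)|$ is ``convex and thus has a unique minimum''.

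You instead sandwich $t_n$: $\psi_n$ converges pointwise at the two fixed points $e_\alpha(x)\pm\epsilon$, and each $\psi_n$ is monotone. Your version only needs the standard fact that weak convergence plus uniform integrability gives convergence of expectations of functions of linear growth. The paper evaluates at the random, moving point $t_n$, which tacitly requires convergence uniform in $t$. That uniformity is obtainable from $\int_{\R}|K_n-K|\dd\lambda\to 0$, which follows from uniform convergence plus the UI tail control, but the paper does not spell it out.

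Your justification of uniqueness via the slope bound $\psi'\le-\min(\alpha,1-\alpha)$ is also the right one. In general $|\psi|$ is not convex: since $\psi'=-\alpha+(2\alpha-1)K$, $\psi$ is convex for $\alpha>1/2$ and concave for $\alpha<1/2$, so $|\psi|$ is concave on one side of $e_\alpha(x)$. What the paper's route buys, once repaired, is a quantitative statement. Combining $|\psi(t_n)|\ge\min(\alpha,1-\alpha)\,|t_n-e_\alpha(x)|$ with the bound above gives $|t_n-e_\alpha(x)|\le\frac{\max(\alpha,1-\alpha)}{\min(\alpha,1-\alpha)}\int_{\R}|K_n-K|\dd\lambda$. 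So any future rate for the conditional distribution estimator would transfer directly to the expectiles.

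Two minor points. The tail condition with $\min(K_n,1-K_n)$ is not equivalent to uniform integrability in general; point masses at $n$ satisfy it trivially. It is equivalent only given tightness, which you do have here, and you only use the direction from UI anyway, so your Step 1 already settles the ``main obstacle''. Also, $\psi_n$ is automatically finite, because $Z_n^{(x)}$ is supported on the sample values $Y_1,\dots,Y_n$.
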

\begin{proof}
Using the triangle inequality and the assumption that we can exchange the integral and the limit, we have that
\begin{align*}
    &\abs*{ \alpha \int_{t_n}^\infty 1 - K_H(x, (-\infty, y]) \dd{\lambda(y)} - ( 1 - \alpha ) \int_{-\infty}^{t_n} K_H(x, (-\infty, y]) \dd{\lambda(y)} } \\
    &\quad \leq \begin{multlined}[t]
        \alpha \abs*{ \int_{t_n}^\infty K_{\CB_{N(n)}(E_n)}(F_n(x), [0, G_n(y)]) - K_H(x, (-\infty, y]) \dd{\lambda(y)}  } \\
        + (1 - \alpha) \abs*{ \int_{-\infty}^{t_n} K_{\CB_{N(n)}(E_n)}(F_n(x), [0, G_n(y)]) - K_H(x, (-\infty, y]) \dd{\lambda(y)}  }
    \end{multlined} 
\end{align*}
and the latter two sums converge to $0$ almost surely as $n \to \infty$. 
This implies that $t_n \to t$ as $n \to \infty$, since the function
\[ c \mapsto \abs*{ \alpha \int_{c}^\infty 1 - K_H(x, (-\infty, y]) \dd{\lambda(y)} - ( 1 - \alpha ) \int_{-\infty}^{c} K_H(x, (-\infty, y]) \dd{\lambda(y)} } \]
is convex and thus has a unique minimum.
\end{proof}


\section{Simulation Study}\label{sec:simulation}

The purpose of this section is twofold, firstly, to illustrate the results 
derived in the previous sections and, secondly, to compare our novel estimators
in the regression setting with existing ones. 
As before, we first focus on the co\-pula setting and visualize    Lemma~\ref{lem:checkerboard_convergence} in terms of simulations. 
We then proceed to simulations concerning Theorem~\ref{cor:mean_reg} and Theorem~\ref{cor:quant_reg} and close the section with a simulation underlining the fact 
that the empirical checkerboard approximations can also be used to estimate 
other conditional quantities of interest like, e.g., conditional variance. 

\subsection{Copula setting}

As our first example we consider the Ali-Mikhail-Haq (AMH) family, which 
consists of copulas allowing a continuous Markov kernel.

\begin{example}
\emph{
The AMH copula with parameter $\theta \in [-1, 1)$  (see \citet[Example~6.5.19]{sempi15}) is given by
\[ C_\theta(x, y) = \frac{xy}{1 - \theta (1 - x) (1 - y)}. \]
The continuous version of the Markov kernel of $C_{\theta}$ is given by
\[ K_{C_{\theta}}(x, [0, y]) = \frac{\theta(y-1)y + y}{(1 - \theta(1-x)(1-y))^2} \]
for $x,y \in \I$. Considering $\theta=0.75$, drawing samples from $(X,Y) \sim C_\theta$ of 
increasing size 
$n \in \{100,250,2500,10000,25000\}$, evaluating  
$$
\abs*{ K_{C_\theta}(x, [0, y]) - K_{\CB_{N(n)}(E_n)}(x, [0, y])}
$$
on a random sample of size $m = 2 \cdot \floorfct{n^{0.45}}^{2}$ from $(U,V) \sim \Pi$
and calculating the maximal value over these points yields a value $v \in \I$ for each 
sample size $n$. Interpreting this value $v$ as approximation for 
\[ \sup_{(x, y) \in \I^2} \abs*{ K_{C_\theta}(x, [0, y]) - K_{\CB_{N(n)}(E_n)}(x, [0, y])}, \]
and repeating the simulations $R=2.000$ times yields the results gathered in the 
left panel of 
Figure~\ref{fig:AMH_Copula_convergence}. With increasing $n$ the 
errors seem to tend to zero, confirming Lemma~\ref{lem:lemma_rhs}.
}
\end{example}

As second example we consider Clayton copulas, which do not allow a continuous version 
of the Markov kernel and show that in this case the 
empirical checkerboard approximation does not converge uniformly conditional. 
\begin{example}
\emph{
The Clayton copula with parameter $\theta \in [-1, \infty) \setminus\{0\}$ (see  \citet[Example~6.5.17]{sempi15}) is given by
\[ C_\theta(x, y) = \max \curbr*{ \rb*{ x^{-\theta} + y^{-\theta} - 1}^{-1/\theta} , 0}. \]
In this case, for $x,y \in (0,1]$ a version of the Markov kernel is given by
\[ K_{C_{\theta}}(x, [0, y]) =  \max \curbr*{ x^{-\theta - 1}\rb*{ x^{-\theta} + y^{-\theta} - 1}^{-\frac{\theta + 1}{\theta}} , 0}, \]
which is continuous at every $(x,y)\in (0, 1]^2$. It is straightforward to show that $C_\theta$ 
does not allow
a continuous Markov kernel. In fact, considering an arbitrary copula $C$, 
disintegration implies
$$
0=\mu_C(\I \times \{0\}) = \int_\I K_C(x,\{0\}) \dd \lambda(x),
$$
so for $\lambda$-almost every $x \in \I$ every kernel has to fulfil $K_C(x,\{0\})=0$. 
As a direct consequence, every continuous Markov kernel $K_C$ has to fulfill   
$K_C(0,\{\0\})= 0$. Returning to the Clayton copula $C_\theta$, assuming that
$(x,y) \mapsto K_{C_\theta}(x,[0,y])$ would be continuous on $\I^2$ and
using the afore\-men\-tioned pro\-perty yields  
\[ K_{C_\theta}(0, \{0\}) =  K_{C_\theta}(0, [0, 0]) = 0 \neq 2^{-\frac{\theta+1}{\theta}} = \lim_{\epsilon \to 0} K_{C_\theta}(\epsilon, [0, \epsilon]), \]
a contradiction. Hence $C_\theta$ does not allow a continuous Markov kernel.
Considering $\theta = 2$ and the analogous simulation setup as for the AHM family 
we obtain the results depicted in the right panel of Figure~\ref{fig:Clayton_Copula_convergence}. 
The boxplots insinuate that, contrary to the AHM simulation, the maximal distance 
does not tend to zero as $n \rightarrow \infty$.}
\end{example}


\begin{figure}
\centering
\begin{subfigure}[b]{0.485\textwidth}
     \includegraphics[width=\textwidth]{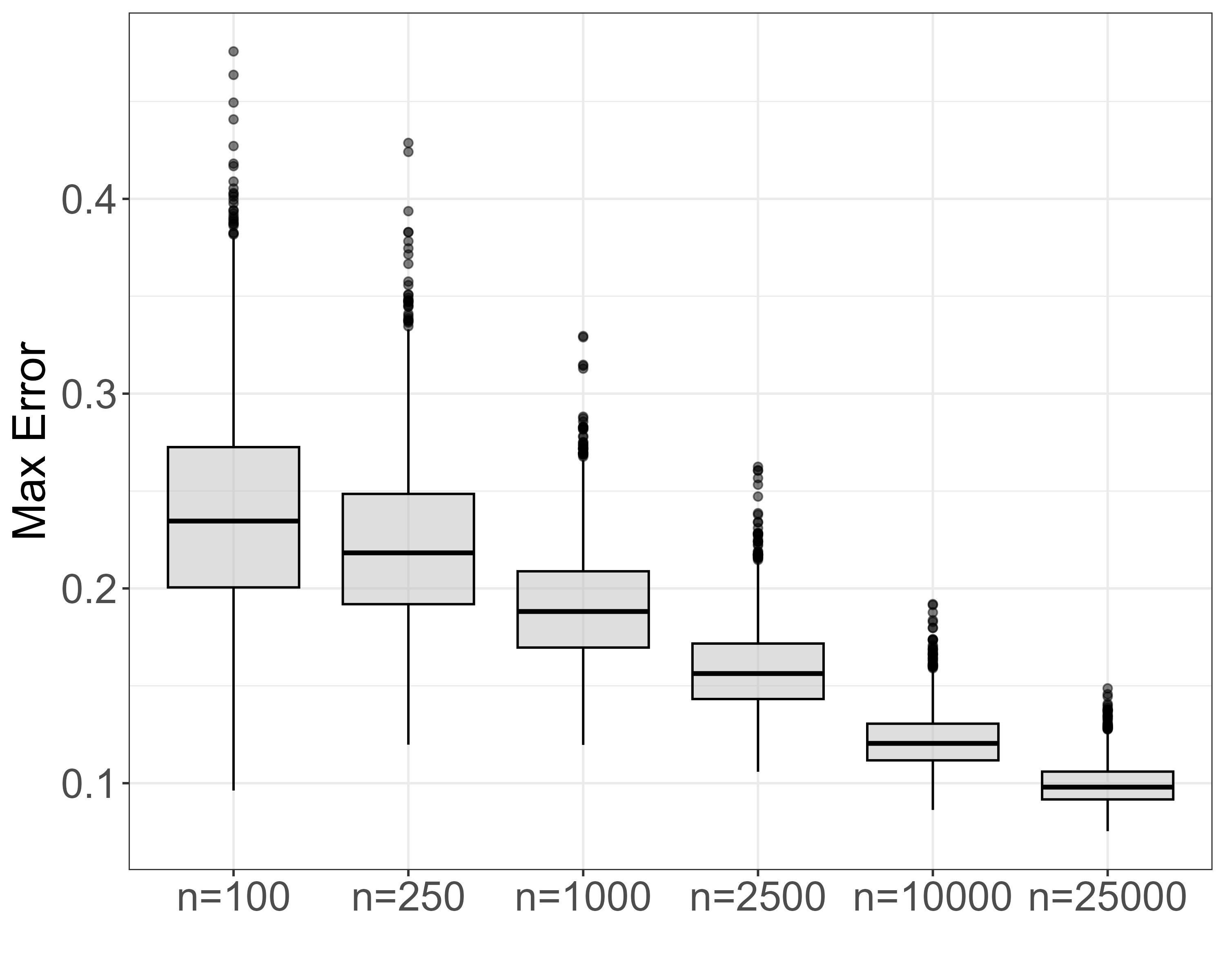}
     \caption{Ali-Mikhail-Haq copula with $\theta = 0.75$}
     \label{fig:AMH_Copula_convergence}
\end{subfigure}
~
\begin{subfigure}[b]{0.485\textwidth}
     \includegraphics[width=\textwidth]{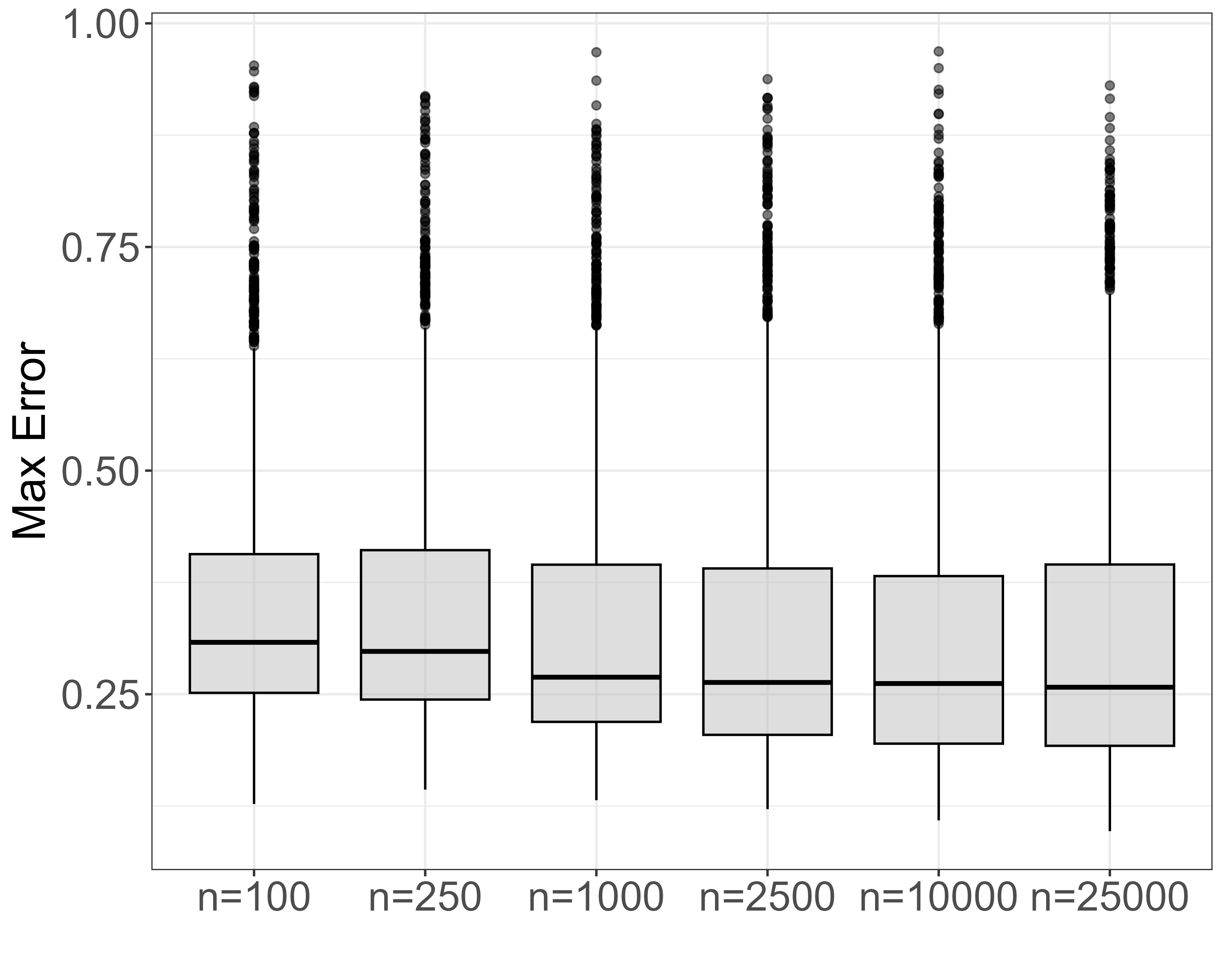}
     \caption{Clayton copula with $\theta = 2$}
     \label{fig:Clayton_Copula_convergence}
\end{subfigure}
\caption{Convergence of the checkerboard approximation for the AMH copula (a) and the Clayton copula (b).}
\label{fig:Copula_convergence}
\end{figure}






\subsection{General regression setting}
We consider the following simulation setup. Let $s, \theta: [0, 10] \to (0,\infty)$ 
be smooth, positive functions, taking the role of the shape and scale of the Gamma distribution, respectively. 
In order to assure that the regularity assumption 
globally holds, we approximate this gamma distribution function as follows: 
Let $c$ be a constant, such that $c \gg s \cdot \theta$ and suppose that 
\begin{align*}
    X &\sim 10 \cdot \operatorname{Beta}(\alpha, \beta), \\
    Y \;|\; X & \sim c \cdot \operatorname{Beta}( \alpha'(X), \beta'(X) ),
\end{align*}
where $\alpha' > 0$ and $\beta' > 0$ are continuous functions with 
$$
\E{ Y \mid X } = s(X) \theta(X), \quad \mathbb{V}( Y \mid X ) =  s(X) \theta(X)^2  
$$
Since $\abs{Y}$ is bounded by $c$, Remark~\ref{rmk:UI} implies that 
the regularity assumptions used in 
Theorem~\ref{cor:mean_reg} and Theorem~\ref{cor:quant_reg} are satisfied.  \\

We now consider mean regression and compare our empirical checkerboard estimator (CBE)
as analyzed in Section~\ref{subsec:mean_regression} with the popular Nadaraya-Watson estimator (NWE).  According to Remark~\ref{rmk:mean_reg_big_o}, the CBE is computationally 
faster than the NWE when facing many predictions. Notice that another advantage of the CBE 
is that there is no need for specifying a hyperparameter for the bandwidth. 
Subsequently, we use Silverman's rule of thumb \citep{silverman98} for selecting 
the bandwidth $h_n$ according to  
$h_n = \widehat{\sigma}_n(X) n^{-\frac{1}{5}}$.

\begin{example}\label{ex:gamma_standard}
\emph{
Suppose that $X \sim \Unif(0, 10)$ and consider  
\[ s(x) =  \max\curbr{0.5, \sqrt{x}}, \quad \text{and} \quad \theta(x) = \min \curbr{ \max \curbr{1, x}, 6}. \]
We chose this example since it helps visualising the performance of the two methods CBE 
and NWE under heteroscedasticity.
\begin{figure}
\centering
\includegraphics[width=\textwidth]{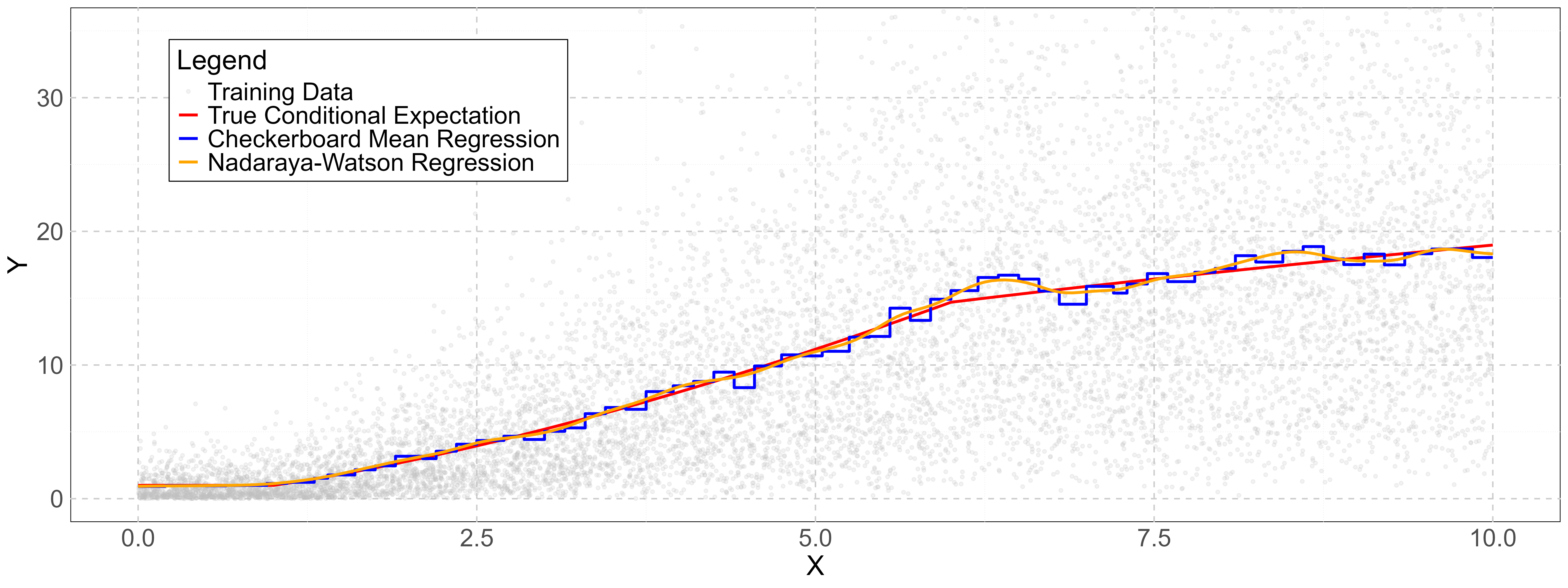}
\caption{Sample of size $n = 10.000$ from the distribution considered in
Example \ref{ex:gamma_standard}. CBE for the regression function (blue), NWE (orange) and true regression 
function (red). }
\label{fig:mean_regression_10000}
\end{figure}
\noindent Figure~\ref{fig:mean_regression_10000} depicts a sample of $n = 10.000$ points 
from this distribution as well as the CBE (blue), the NWE (orange) and 
the true regression function (red). We can clearly see the piecewise constant form of 
the CBE compared to the continuous NWE. The performance of the CBE and the NWE 
is similar in the low-variance region. In the region with high variance, the CBE is 
less smooth. 
\begin{figure}
\centering
\begin{subfigure}[b]{0.9\textwidth}
    \includegraphics[width=\textwidth]{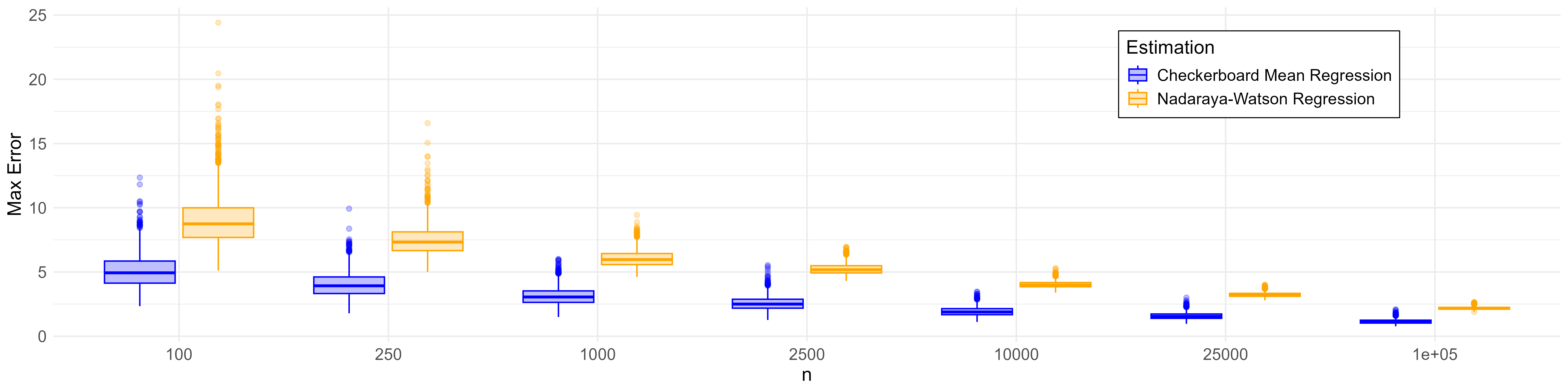}
    \caption{Max Error}
    \label{fig:mean_regression_convergence_max}
\end{subfigure}
~
\begin{subfigure}[b]{0.9\textwidth}
    \includegraphics[width=\textwidth]{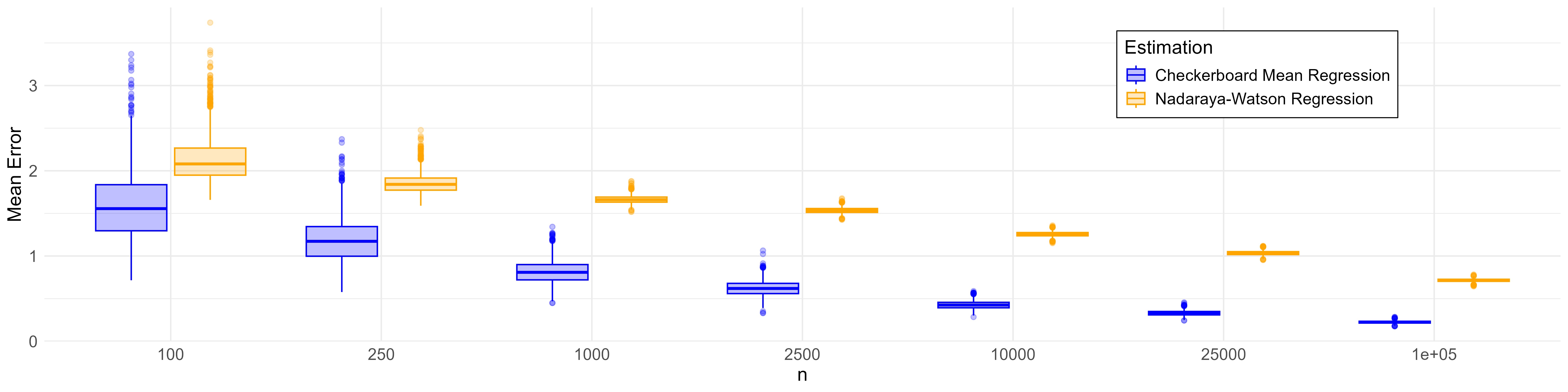}
    \caption{Mean Error}
    \label{fig:mean_regression_convergence_mean}
\end{subfigure}
\caption{(Approximations of the) Mean absolute errors of the CBE (blue) and the 
NWE (orange) for the distribution considered in Example \ref{ex:gamma_standard}; 
the mean/max absolute error was calculated by evaluating at $m = 2.000$ randomly 
generated points.}
\label{fig:mean_regression_convergence}
\end{figure}
Calculating the maximum and the mean absolute error (over a grid of $m = 2.000$ randomly 
generated points drawn from $\mathbb{P}^X$), repeating the procedure $R=2.000$ times and summarizing the 
obtained values in terms of boxplots yields the results  
depicted in Figure~\ref{fig:mean_regression_convergence_max} and Figure~\ref{fig:mean_regression_convergence_mean}. The graphics show that in
this very regular setup the NWE converges faster than our CBE since the NWE 
by construction is a weighted average and as such provides smooth estimates. }
\end{example}
In our next example we consider a scenario, in which the NWE oversmoothes the 
mean regression function and performance worse than the CBE. 
\begin{example}\label{ex:gamma_sin}
\emph{
Again consider $X \sim \Unif(0, 10)$, but this time set
\[ s(x) :=  \max\curbr{1, \sqrt{x}} \rb*{1 + \frac{\sin(10x)}{2}}, \quad 
\theta(x): = \min \curbr{ \max \curbr{1, x}, 6}. \]
\begin{figure}
\centering
\includegraphics[width=\textwidth]{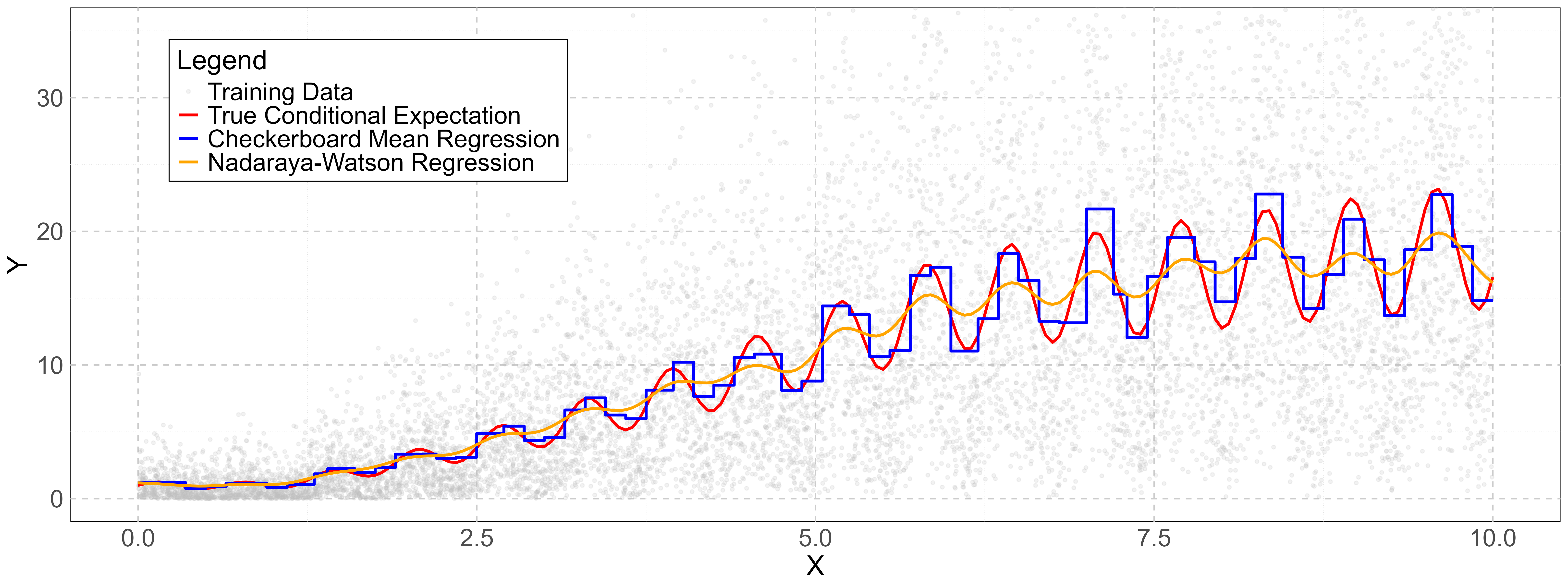}
\caption{Sample of size $n = 10.000$ from the distribution considered in
Example \ref{ex:gamma_sin}. CBE for the regression function (blue), NWE (orange) and true regression 
function (red). }
\label{fig:mean_regression_sin_10000}
\end{figure}
\noindent Sampling from this distribution, calculating, and plotting the CBE, the NWE and 
the true regression function yields the results depicted in Figure~\ref{fig:mean_regression_sin_10000}. Obviously the NWE perfoms poorely 
near the local extrema, whereas the CBE performs better and is only slightly inaccurate  
close to local extrema (which is a natural consequence of the CBE producing 
a step function as estimator for the true regression function). \\
\noindent Repeating the simulations mentioned in Example \ref{ex:gamma_standard} 
for this distribution yields the results gathered in 
Figure~\ref{fig:mean_regression_sin_convergence}. The boxplots insinuate 
that both the CBE and the NWE estimator converge uniformly (and in the $L^1$-sense). 
The upper panel of Figure~\ref{fig:mean_regression_sin_convergence} shows 
that the maximal absolute errors of the CBE and NWE are quite similar. 
The lower panel, however, shows that in terms of the mean absolute error the CBE 
performs better than NWE, confirming the natural idea that in less regular/smooth
setting, the CBE may produce better predictions than the NWE.
\begin{figure}
\centering
\begin{subfigure}[b]{0.9\textwidth}
    \includegraphics[width=\textwidth]{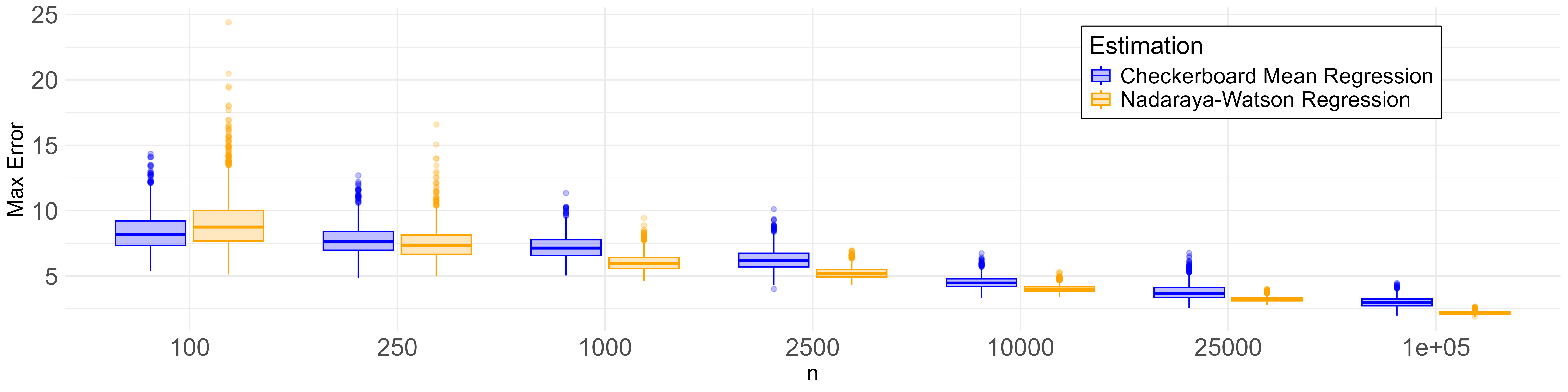}
    \caption{Max Error}
    \label{fig:mean_regression_convergence_sin_max}
\end{subfigure}
~
\begin{subfigure}[b]{0.9\textwidth}
    \includegraphics[width=\textwidth]{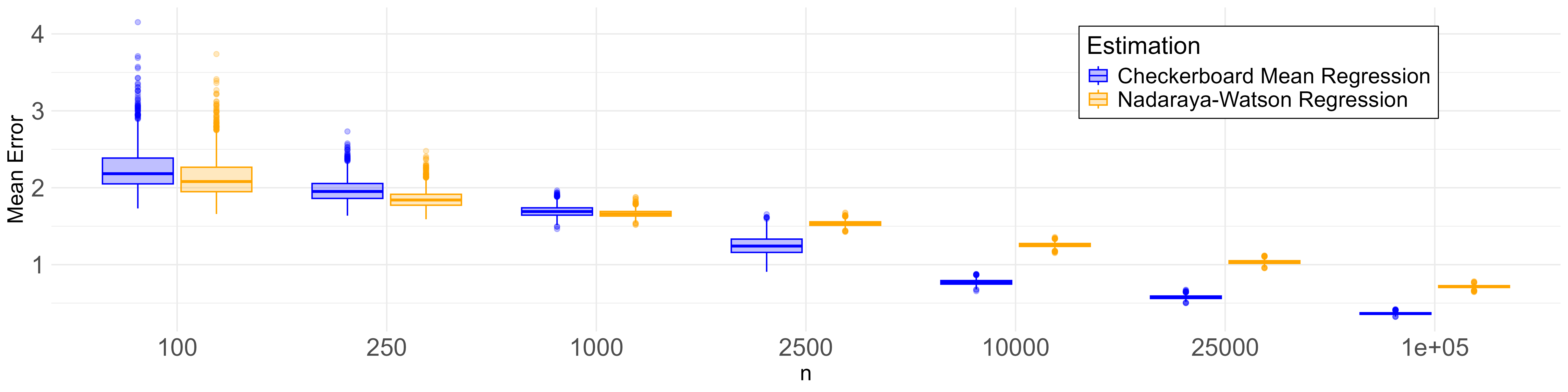}
    \caption{Mean Error}
    \label{fig:mean_regression_convergence_sin_mean}
\end{subfigure}
\caption{(Approximations of the) Mean/max absolute errors of the CBE (blue) and the 
NWE (orange) for the distribution considered in Example \ref{ex:gamma_sin}; 
the mean/max absolute error was calculated by evaluating at $m = 2.000$ randomly 
generated points. The second panel clearly points at
faster convergence of the CBE for the mean absolute error.}
\label{fig:mean_regression_sin_convergence}
\end{figure}}
\end{example}

In the final example concerning mean regression we no longer assume that $X$ is 
uniformly distributed in order to compare the behaviour of the two estimators in areas with 
scarce data. 

\begin{example}\label{ex:gamma_tails}
\emph{
Consider $X \sim \operatorname{Beta}(2, 4)$ and let $Y \;|\; X$ have the same 
distribution as in Example \ref{ex:gamma_standard}. 
Then we have scarce data in the high covariate region. 
Figure~\ref{fig:mean_regression_tails_10000} shows that the CBE produces 
better predictions than the NWE for large values of $x$, implying that 
the CBE may outperform the NWE in regions with spare data. 
Notice that, considering that our method builds upon aggregating the (rank-based) 
empirical copula,
not surprisingly it exhibits more robustness than the NWE. 
\begin{figure}
\centering
\includegraphics[width=\textwidth]{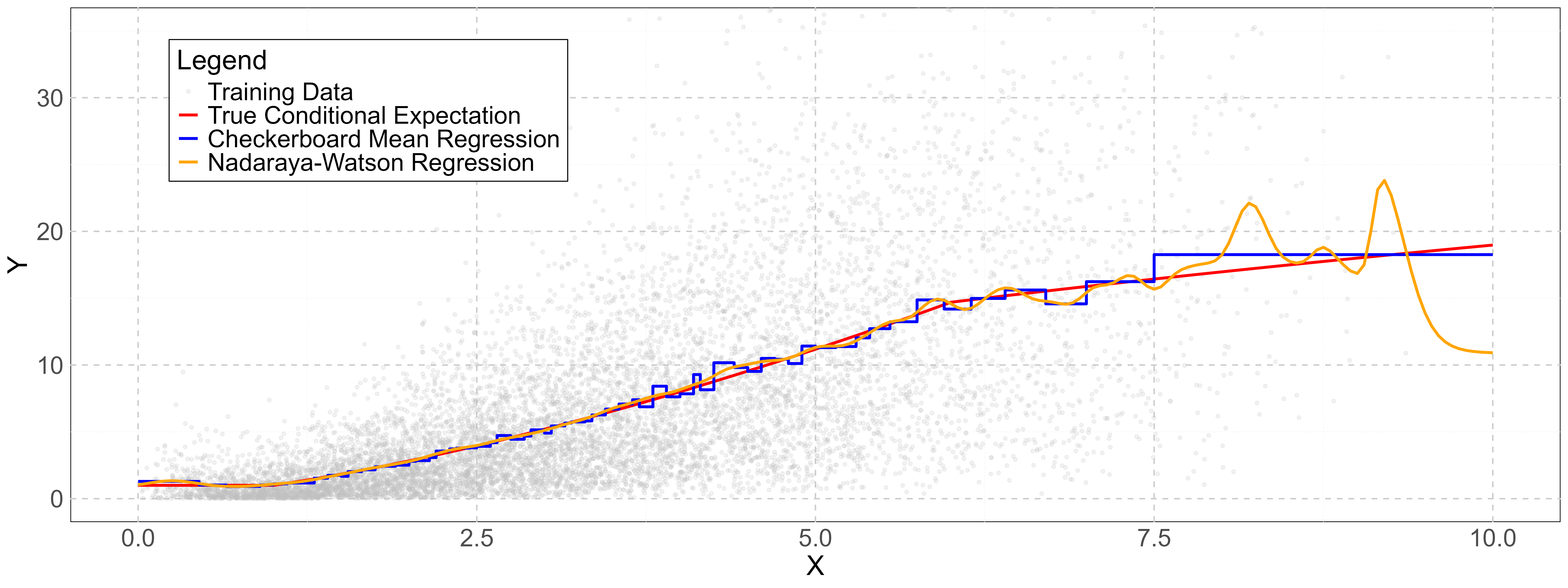}
\caption{Sample of size $n = 10.000$ from the distribution considered in
Example \ref{ex:gamma_tails}. CBE for the regression function (blue), NWE (orange) and true regression 
function (red). }
\label{fig:mean_regression_tails_10000}
\end{figure}
\begin{figure}
\centering
\begin{subfigure}[b]{0.9\textwidth}
    \includegraphics[width=\textwidth]{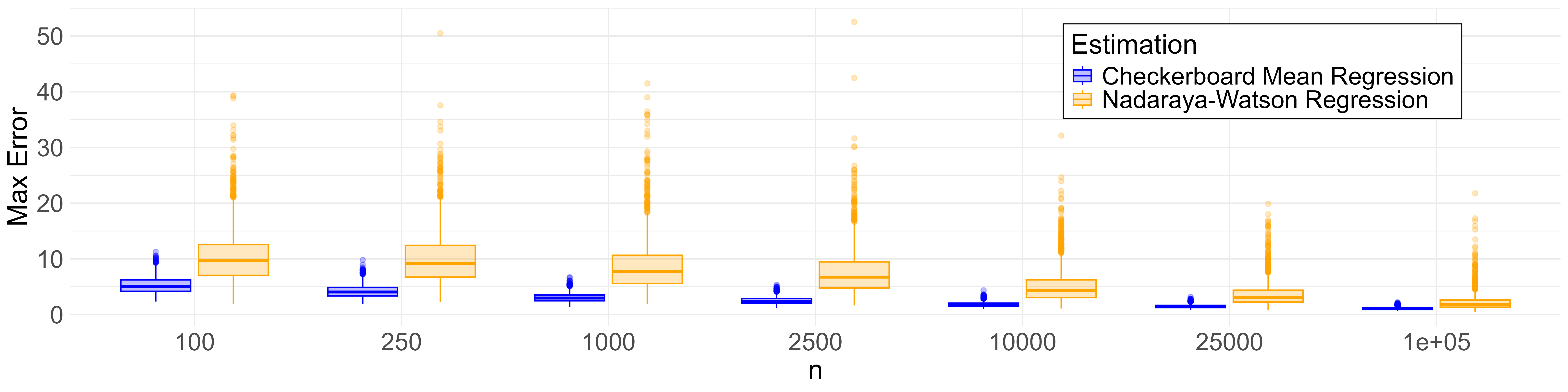}
    \caption{Max Error}
    \label{fig:mean_regression_convergence_tails_max}
\end{subfigure}
~
\begin{subfigure}[b]{0.9\textwidth}
    \includegraphics[width=\textwidth]{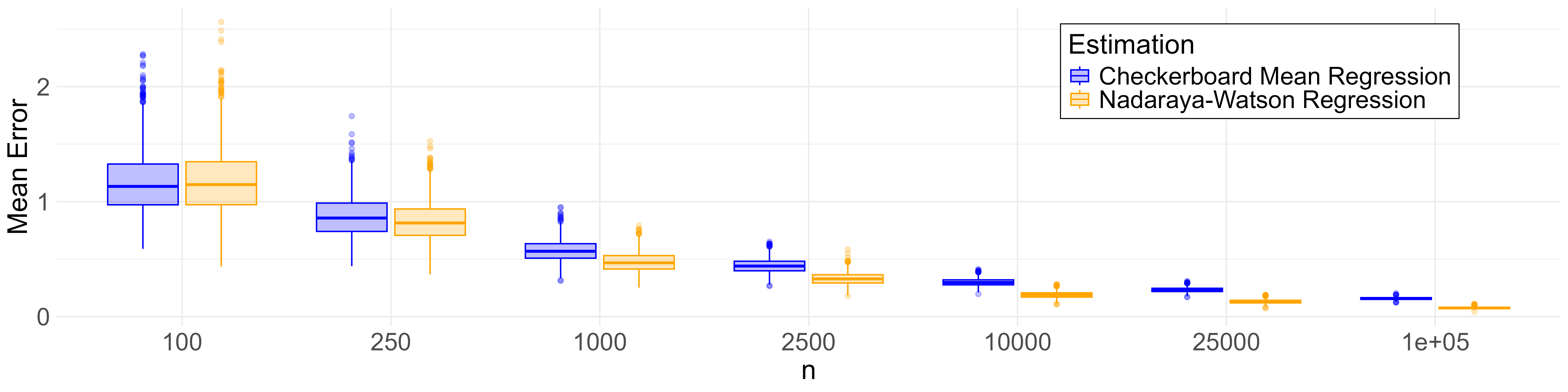}
    \caption{Mean Error}
    \label{fig:mean_regression_convergence_tails_mean}
\end{subfigure}
\caption{(Approximations of the) Mean absolute errors of the CBE (blue) and the 
NWE (orange) for the distribution considered in Example \ref{ex:gamma_tails}; 
the mean/max absolute error was calculated by evaluating at $m = 2.000$ randomly 
generated points. The first panel clearly points at
faster convergence of the CBE for the maximum absolute error.}
\label{fig:mean_regression_tails_convergence}
\end{figure}
Figure~\ref{fig:mean_regression_tails_convergence} illustrates that both the CBE and the 
NWE seem to converge in the maximum and mean sense. 
Moreover, in terms of the maximal absolute error the CBE performs much better 
(see upper panel), 
whereas in mean absolute error the NWE exhibits (slightly) smaller errors.}  
\end{example}

We proceed with a simulation illustrating the behaviour of the CBE
in the context of quantile regression. In this context we compare our 
empi\-rical checkerboard quantile estimator (CBQE) studied in Section~\ref{subsec:quantile_regression} with the Nadaraya-Watson estimator \citep{hall99} for quantile regression (NWQE, for short). 
Notice that for predicting $m$ points our proposed CBQE exhibits a time complexity of $\bigO{m + N(n)n}$, while the NWQE has a time complexity of $\bigO{mn \log(n)}$.
The additional $\log(n)$ term in the NWQE is due to the additional sorting step required when computing. In the CBQE, the sorting is only required in the preparatory step, where $\log(n)$ is dominated by $N(n)$.
\begin{example}\label{ex:gamma_standard_quantile}
\emph{
We again consider the distribution discussed in Example~\ref{ex:gamma_standard} and focus
on the median (i.e., $\tau=\frac{1}{2}$).
Calculating and plotting the CBQE, the NWQE, and the true quantile function yields the results depicted in Figure~\ref{fig:quantile_regression_10000}. 
Similar to Example~\ref{ex:gamma_standard}, we can clearly observe the piecewise constant form of the CBQE contrasting the smooth function produced by the NWQE. 
We can once again observe a difference in accuracy comparing the high variance 
and the low variance region.
\begin{figure}
\centering
\includegraphics[width=\textwidth]{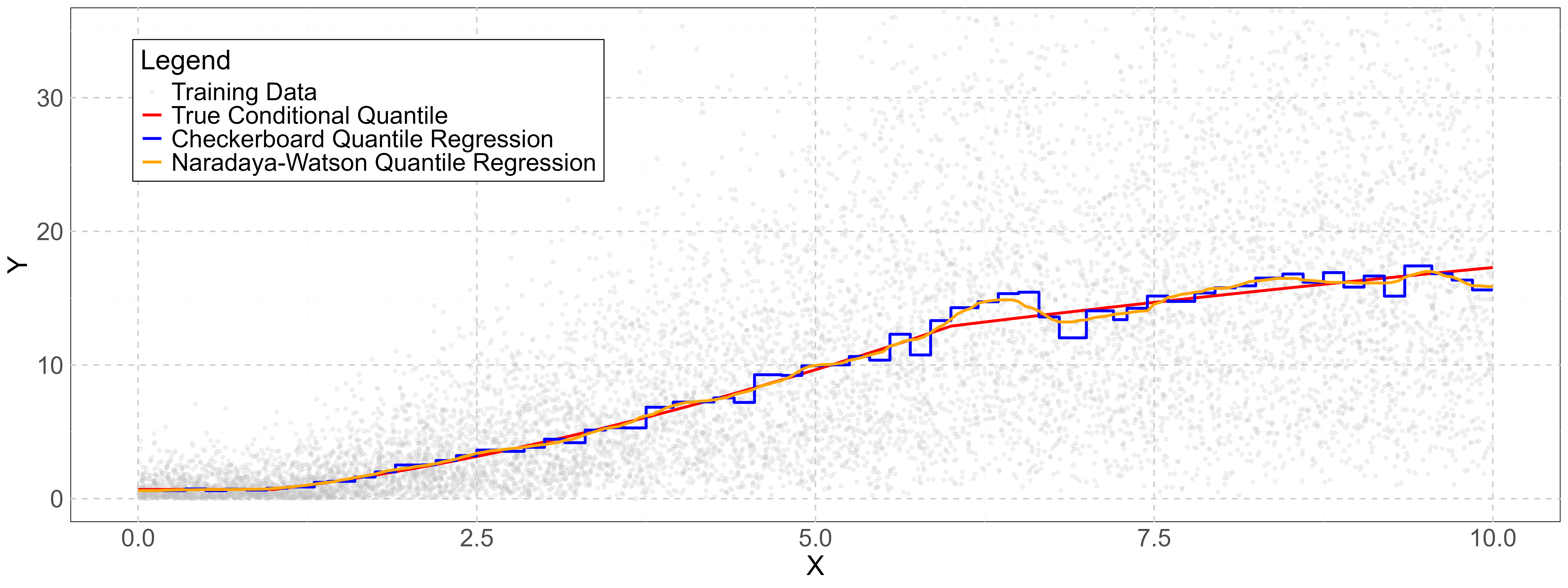}
\caption{Sample of size $n = 10.000$ from the distribution considered in
Example \ref{ex:gamma_standard_quantile}. CBQE for the regression function (blue), NWQE (orange) and true quantile 
function (red). }
\label{fig:quantile_regression_10000}
\end{figure}
\begin{figure}
\centering
\begin{subfigure}[b]{0.9\textwidth}
    \includegraphics[width=\textwidth]{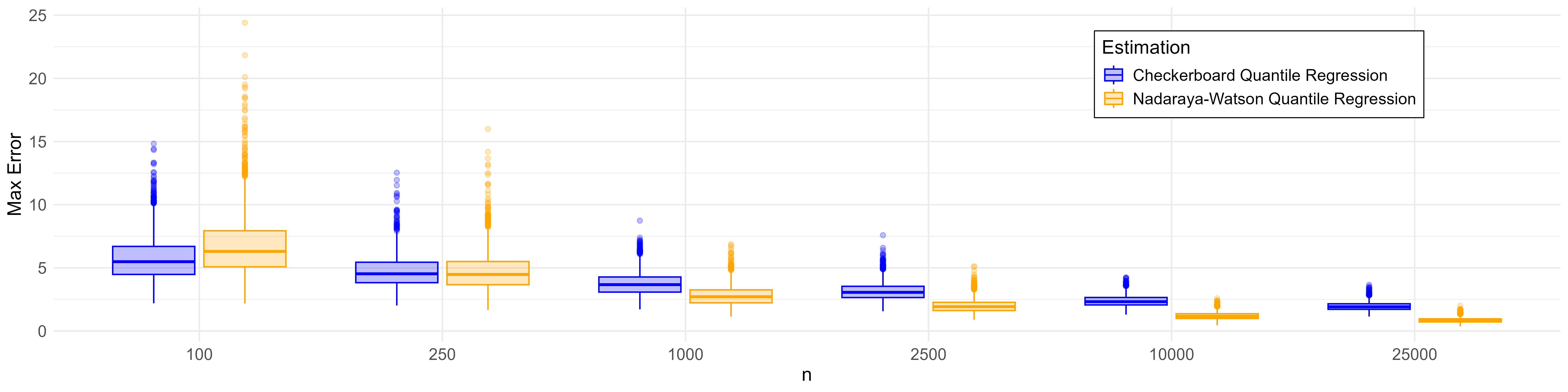}
    \caption{Max Error}
    \label{fig:quantile_regression_convergence_max}
\end{subfigure}
~
\begin{subfigure}[b]{0.9\textwidth}
    \includegraphics[width=\textwidth]{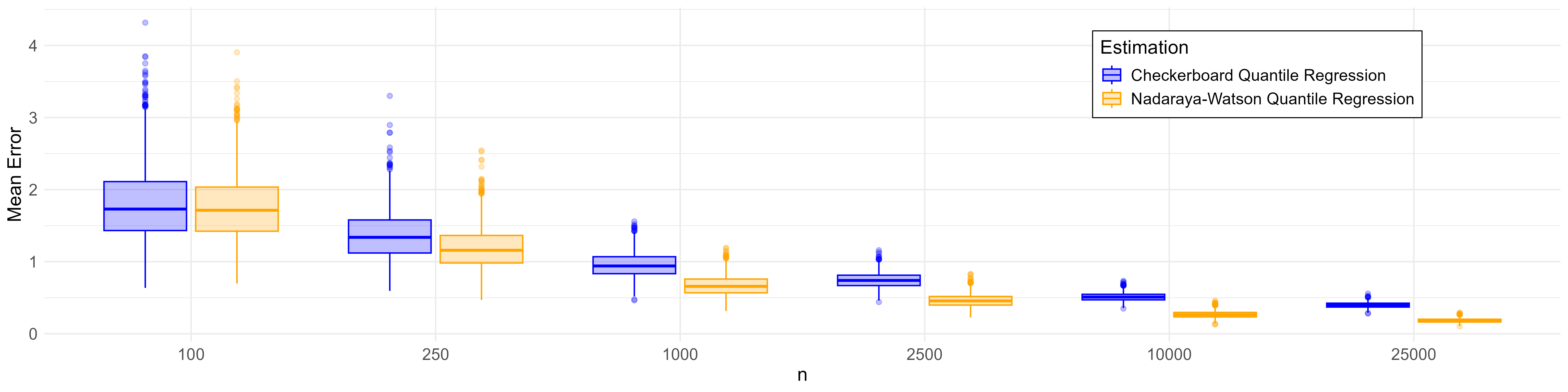}
    \caption{Mean Error}
    \label{fig:quantile_regression_convergence_mean}
\end{subfigure}
\caption{(Approximations of the) Mean absolute errors of the CBQE (blue) and the 
NWQE (orange) for the distribution considered in Example \ref{ex:gamma_standard_quantile}; 
the mean/max absolute error was calculated by evaluating at $m = 2.000$ randomly 
generated points.}
\label{fig:quantile_regression_convergence}
\end{figure}
Repeating the simulations from Example~\ref{ex:gamma_standard}, we obtain Figure~\ref{fig:quantile_regression_convergence}, which can be summarized 
similarly as Figure~\ref{fig:mean_regression_convergence}. }
\end{example}



\begin{remark}
\emph{It is crucial to note that uniform conditional convergence of the empirical 
checkerboard approximation as stated in Theorem~\ref{thm:uniform_convergence} has 
consequences going beyond regression.  
One simple example is the nonparametric estimation for the conditional variance. 
Considering the set-up from Example~\ref{ex:gamma_standard} 
(with adjusted sample size $n = 100.000$ and $s = 0.35$), and calculating the variance 
of the random variable $Z^x$ with distribution function 
$K_{\CB_{N(n)}(E_n)}(F_n(x), [0, G_n(\cdot)])$ on a grid of $x$-values 
yields the step function depicted in blue in 
Figure~\ref{fig:variance_regression_100000} as estimator for the conditional variance;  
the true conditional variance function is depicted in red.
\begin{figure}
\centering
\includegraphics[width=\textwidth]{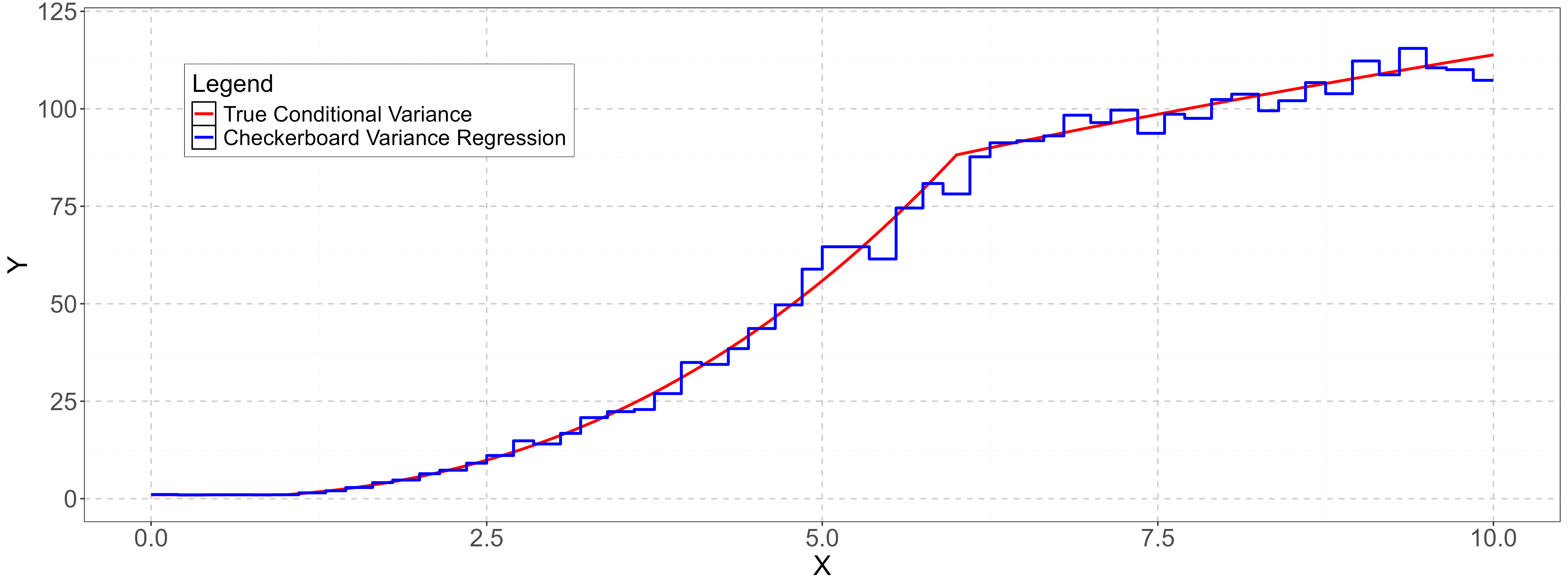}
\caption{Sample of size $n = 100.000$ from the distribution considered in
Example \ref{ex:gamma_standard}. CBE for the conditional variance function (blue) and 
true conditional variance function (red). }
\label{fig:variance_regression_100000}
\end{figure}
We can see that the estimator correctly captures the general trend of the true conditional variance function. Although the speed of convergence seems quite low, this 
underlines the flexibility and the wide range of applications of Theorem~\ref{thm:uniform_convergence}.
} 
\end{remark}

\begin{remark}
\emph{
Notice that moving away from step functions and aiming at smoother estimators 
one could consider combining the empirical checkerboard based estimators and 
smooth the obtained step functions by a smoothing method of choice. 
We have refrained from doing so in the current paper since our main focus is 
on empirical checkerboard approximations and their main properties.}  
\end{remark}


\section{Real Data Example: Insurance Loss vs. Claim-Handling Costs}\label{sec:real_world}

Applying our estimators to a real data example we consider the data 
provided by Insurance Service Offices, analysed in \citet{frees98}, and 
consisting of $n = 1.466$ uncensored observations. 
This dataset is publicly available through the R package \texttt{copula} \citep{hofert25} and is called \texttt{loss}.
We focus on the relationship of the covariate `log-idemnity payment' (loss) and the 
response `log-allocated loss adjustment payment' (ALAE).
Figure~\ref{fig:case_study_plot} depicts the $n = 1.466$ datapoints 
as well as the CBE (blue) and the NWE (orange). We can see that the CBE is much more stable 
in the tails, where the NWE is known to be sensitive to single points/outliers.
\begin{figure}
    \centering
    \includegraphics[width=\linewidth]{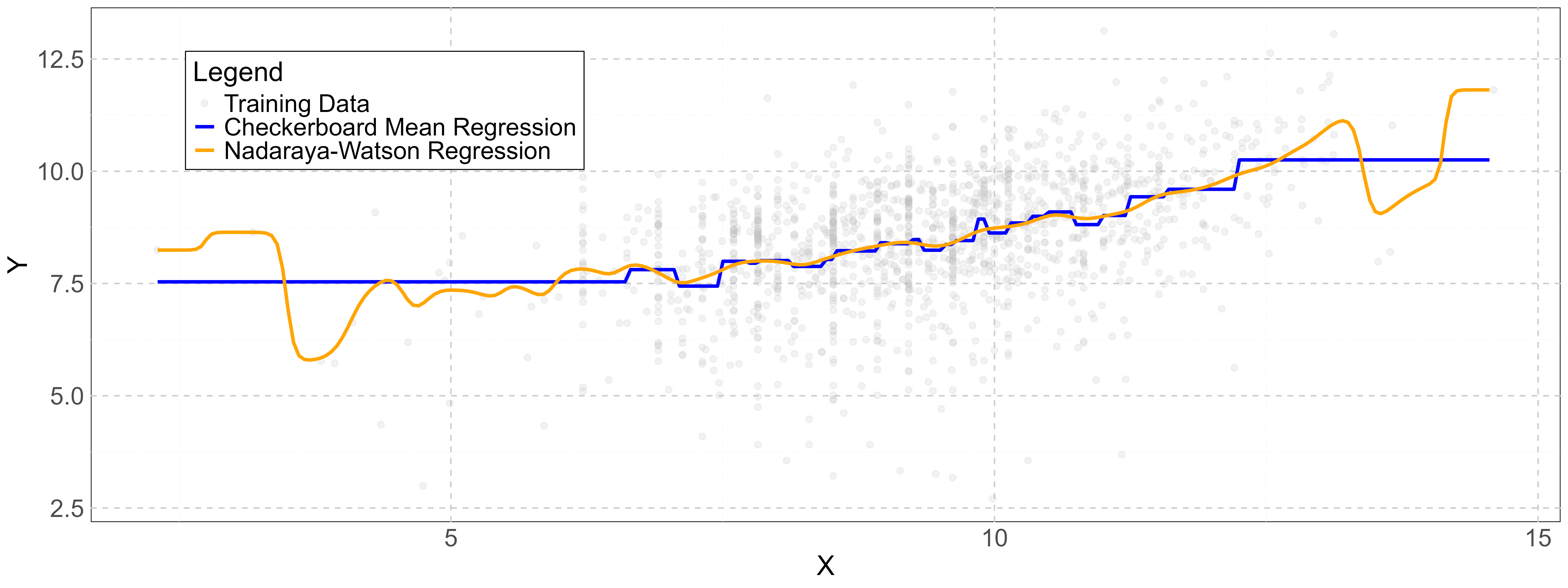}
    \caption{CBE (blue) and NWE (orange) for the insurance data.}
    \label{fig:case_study_plot}
\end{figure}
For comparing the performance of the CBE and the NWE we ran the following 
small experiment $R=10.000$ times: First, we randomly split the data into a training set 
(80\%) and a test set (20\%), compute the CBE and NWE for the training set, and 
compare them with the true observations. 
Figure~\ref{fig:case_study_error_max} depicts the obtained results showing that 
the CBE and the NWE perform similarly. 
\begin{figure}
\centering
\begin{subfigure}{.5\textwidth}
  \centering
  \includegraphics[width=\linewidth]{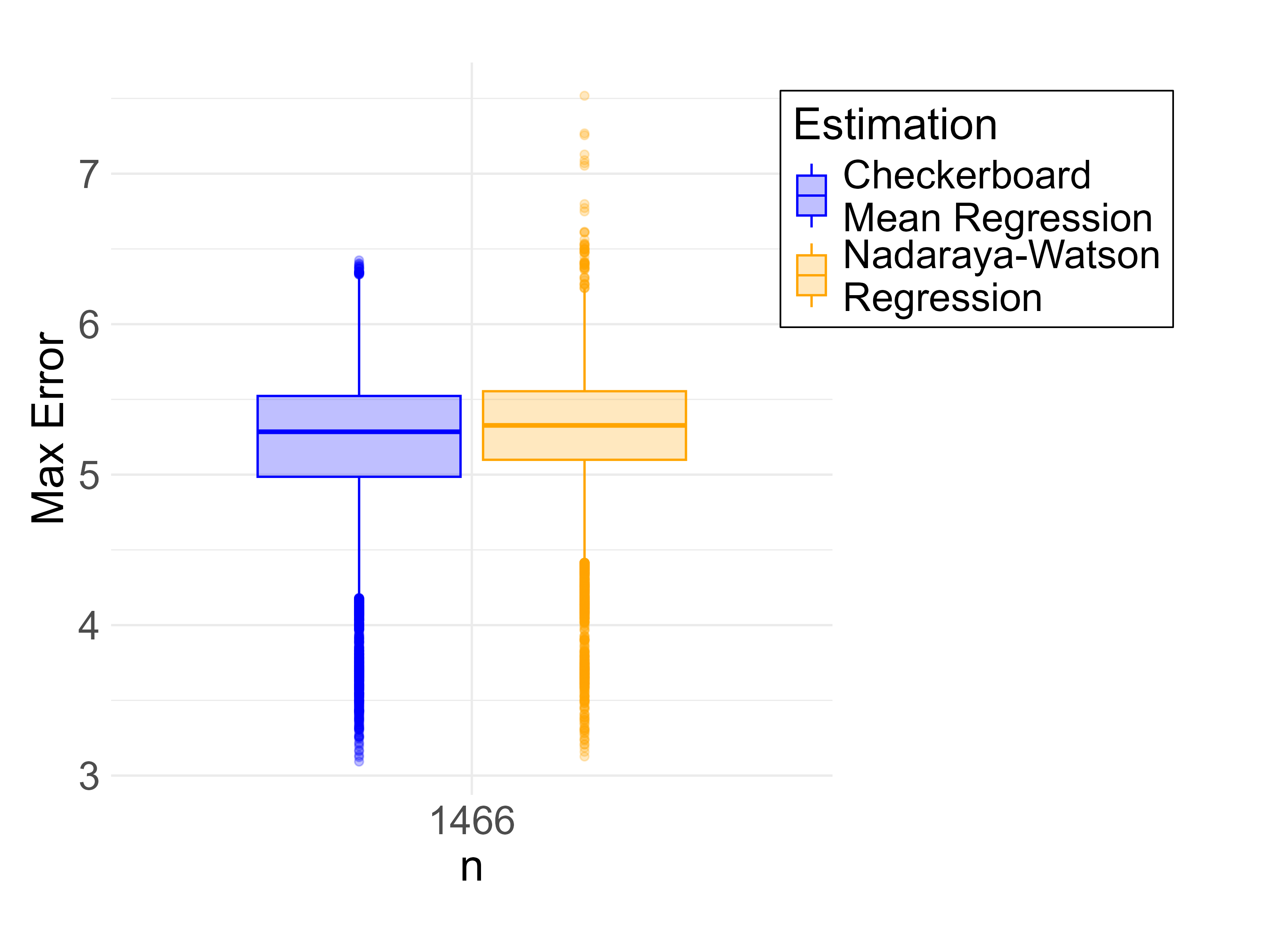}
  \caption{Max Error}
  \label{fig:case_study_error_max}
\end{subfigure}%
\begin{subfigure}{.5\textwidth}
  \centering
  \includegraphics[width=\linewidth]{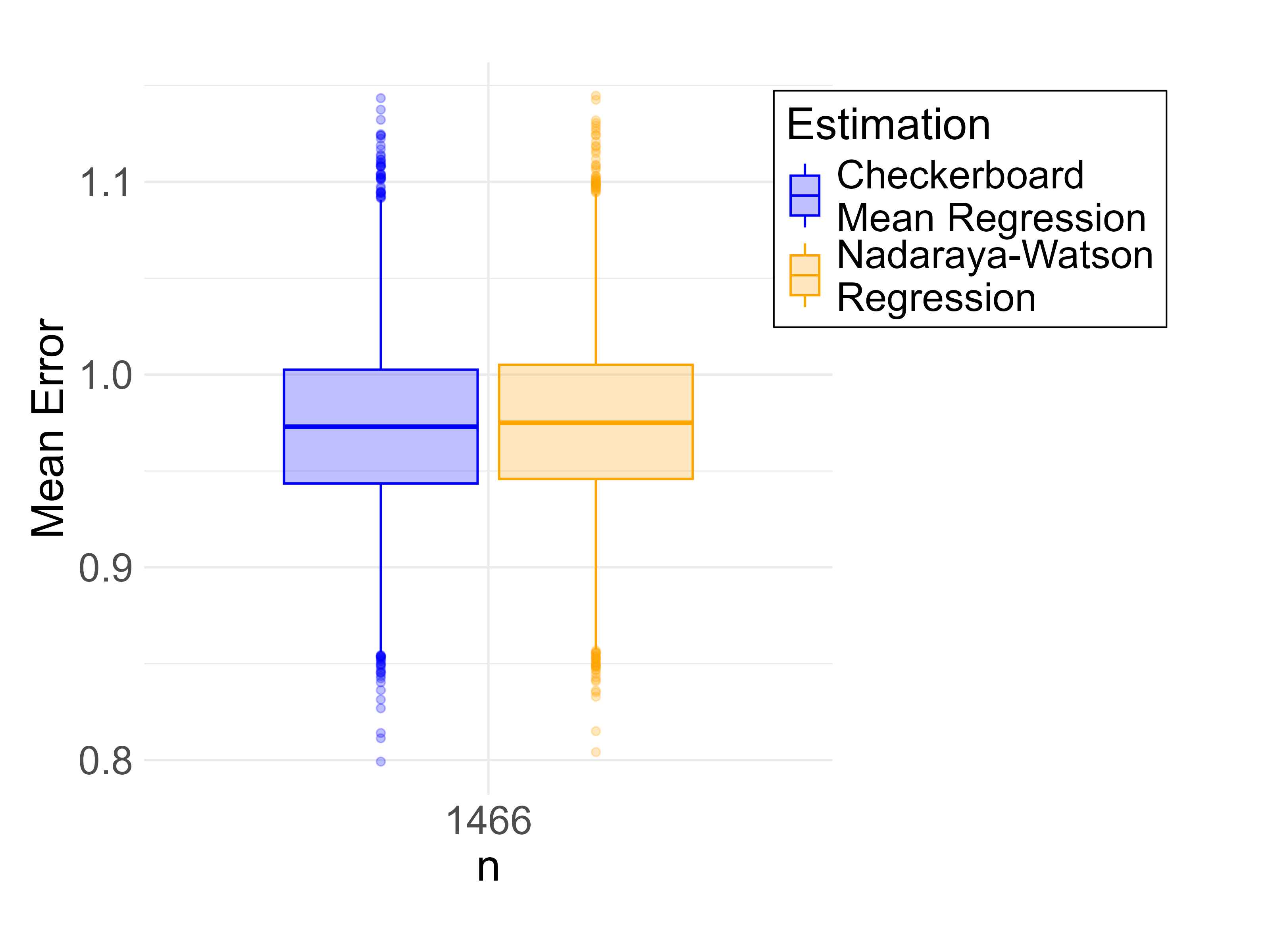}
  \caption{Mean Error}
  \label{fig:case_study_error_mean}
\end{subfigure}
\caption{Maximal/mean absolute errors of the CBE and the NWE obtained by randomly splitting the dataset, training the models on the training set and calculating their errors for the test set
  ($R = 10.000$ runs).}
\label{fig:case_study_error}
\end{figure}

\noindent We observe that our proposed CBE performs better when considering maximal absolute errors, while the precision is similar for the mean absolute error. 
This suggests that we do not lose accuracy compared to the popular NWE, while being  computationally more efficient and more stable in the tails.\\

\noindent \textbf{Acknowledgements.}
Both authors gratefully acknowledge the support of the WISS 2025 project ‘IDA-lab Salzburg’
(20204-WISS/225/348/3-2023 and 20102/F2300464-KZP).

\appendix
\renewcommand{\thesection}{\Alph{section}}


\end{document}